\numberwithin{equation}{section}
\newtheorem{thm}{Theorem}[section]
\newtheorem{lemma}[thm]{Lemma}
\newtheorem{conj}[thm]{Conjecture}
\theoremstyle{definition}
\newtheorem{defin}[thm]{Definition}
\newcommand{\sse}{\mathsf{E}}
\newcommand{\ssd}{\mathsf{D}}
\newcommand{\ssa}{\mathsf{A}}
\title[On combinatorics of Voronoi polytopes]
{On combinatorics of Voronoi polytopes for pertubations of the dual root lattices}
\author{Alexey Garber}
\thanks{\today}
\thanks{\thinspace School of Mathematical \& Statistical Sciences,
University of Texas Rio Grande Valley, Brownsville, TX, 78520}
\thanks{\thinspace {\hspace{.45ex}}
Email:
\hskip.06cm
\texttt{alexey.garber@utrgv.edu}}
\def\zz{\mathbb Z}
\def\rr{\mathbb R}
\def\<{\langle}
\def\>{\rangle}
\def\0{{\mathbf 0}}
\def\conv{{\textrm{conv}}}
\def\.{\hskip.06cm}
\begin{document}





\begin{abstract}
The Voronoi conjecture on parallelohedra claims that for every convex polytope $P$ that tiles Euclidean $d$-dimensional space with translations there exists a $d$-dimensional lattice such that $P$ and the Voronoi polytope of this lattice are affinely equivalent. The Voronoi conjecture is still open for the general case but it is known that some combinatorial restriction for the face structure of $P$ ensure that the Voronoi conjecture holds for $P$.

In this paper we prove that if $P$ is the Voronoi polytope of one of the dual root lattices $\ssd_d^*$, $\sse_6^*$, $\sse_7^*$ or $\sse_8^*=\sse_8$ or their small perturbations, then every parallelohedron combinatorially equivalent to $P$ in strong sense satisfies the Voronoi conjecture. 
\end{abstract}

\maketitle

\section{Introduction}\label{sec:intro}

Root systems appear in many mathematical fields. The associated root lattices and their dual play a prominent role in many geometric questions for lattices including sphere packing and covering problems especially in low dimensions.

In this paper we turn our attention to the properties of the Delone decompositions for dual root lattices and how the combinatorics of the corresponding tilings in their subdivisions can enforce the Voronoi conjecture on parallelohedra that establishes connection between convex polytopes that tile space with translations, parallelohedra, and Voronoi polytopes for lattices. The Voronoi conjecture \cite{Vor} claims that every convex polytope that tiles $d$-dimensional Euclidean space with translations only can be obtained as an affine image of a Voronoi polytope for $d$-dimensional lattice. 

The Voronoi conjecture is proved for $d\leq 5$ and for several families of parallelohedra with local or global combinatorial restrictions; we refer to \cite{GM} for more details and references. Particularly, the Voronoi conjecture in $\rr^5$ was proved only recently in \cite{GM}, and this proof relied on the reduction of the Voronoi conjecture to its combinatorial version from \cite{DGM} as well as on a detailed combinatorial analysis of local structure of possible five-dimensional parallelohedra tilings.

In this paper we study further avenues where a similar combinatoial reduction can be used. Our main result shows that it can be used for many parallelohedra associated with dual root lattices or their small perturbations. In the concluding sections we discuss more general appoaches for such reduction.

Before stating our main results we introduce two main notions that are needed. More detailed introduction is given in Section \ref{sec:vor}.

Given two $d$-dimensional lattices $\Lambda$ and $\Lambda'$, let $\mathcal D$ and $\mathcal D'$ be the associated Delone decompositions. We will say that $\mathcal D'$ is a {\it Delone subdivision} of $\mathcal D$ if there is an affine transformation $\mathcal A$ such that $\mathcal A(\Lambda ')=\Lambda$ and for every polytope $P$ of $\mathcal D'$, $\mathcal A(P)$ is contained in some polytope of $\mathcal D$.

In other words, we can find two affine transformations of $\mathcal D$ and $\mathcal D'$ such that the images share the vertex set and the image of $\mathcal D'$ is a subdivision of the image of $\mathcal D$. 

Now let $P$ be a parallelohedron. It is known that $P$ is centrally symmetric and that there exists unique facet-to-face tiling of the ambient space with translated copies of $P$ assuming $P$ is centered at the origin, see \cite{HSS}; let $\mathcal T_P$ denote this face-to-face tiling. For every face $F$ of this tiling we can construct the {\it dual cell} $\mathcal D(F)$ that consists of centers of all copies of $P$ that are incident to $F$.

The set of all dual cells form the {\it dual cell complex} for $P$ with the face lattice which is dual to the face lattice of $\mathcal T_P$. In case $P$ is the Voronoi polytope for some lattice, the dual cell complex geometrically coincides with the Delone decomposition of that lattice. For an arbitrary parallelohedron it only carries local combinatorics of the corresponding parallelohedral tiling.

Our main result claims that combinatorics of any Delone subdivision for some dual root lattice except $\zz^d$ is enough to enforce the Voronoi conjecture for any associated parallelohedron. In terms of dual complexes this can be formulated as follows.

\begin{thm}\label{thm:main}
Suppose $P$ is a parallelohedron such that the dual cell complex of $P$ is strongly combinatorially equivalent to a Delone subdivision of one of the lattices $\ssd_d^*$, $\sse_6^*$, $\sse_7^*$, or $\sse_8^*$, then $P$ satisfies the Voronoi conjecture.
\end{thm}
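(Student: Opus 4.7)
The natural approach is to apply the combinatorial reduction of the Voronoi conjecture from \cite{DGM}, the same tool that drove the resolution of the five-dimensional case in \cite{GM}. That reduction says roughly that $P$ satisfies the Voronoi conjecture as soon as the dual cell complex of $P$ admits a consistent \emph{canonical scaling} on its edges, that is, a system of positive weights subject to local gluing equations imposed one $3$-dimensional dual cell at a time. Because these equations are purely local and depend only on the combinatorial isomorphism type of the dual cells of small dimension, the whole problem reduces, combinatorial type by combinatorial type, to checking that the canonical-scaling equations are locally solvable and globally consistent on any complex strongly combinatorially equivalent to a Delone subdivision of one of the listed lattices.

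First I would make the Delone decompositions of $\ssd_d^*$, $\sse_6^*$, $\sse_7^*$ and $\sse_8^*=\sse_8$ completely explicit: their Delone cells and the incidences among them are classical, and in particular the possible combinatorial types of $3$- and $4$-dimensional Delone cells are a short list (simplices, cross-polytopes, small pyramids, and certain direct products). Passing to a Delone \emph{subdivision} only refines these cells by inserting polytopes on the existing lattice vertex set, so the set of combinatorial types of dual cells of dimension at most four that can ever arise remains controlled. The second step is therefore to enumerate this finite list explicitly for each of the four lattice families.

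The third and decisive step is the local verification. For each $3$-dimensional dual cell type that can occur in any Delone subdivision of the four target lattices, I would check that the canonical-scaling equations of \cite{DGM} admit a positive solution, and that these solutions patch consistently along shared faces. The main obstacle is twofold. On the one hand, the list of possible local configurations in the $\ssd_d^*$ family grows with $d$, so one needs a uniform argument rather than case-by-case bookkeeping, most plausibly by exploiting the recursive structure $\ssd_{d+1}^* \supset \ssd_d^*$ to reduce the analysis to a few base dimensions. On the other hand, one must guarantee that the locally defined scalings fit together globally on a dual cell complex that need not arise from the Voronoi polytope itself, which is exactly the subtlety that strong combinatorial equivalence is designed to handle. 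Once these verifications are carried out for all four families and all admissible subdivisions, Theorem \ref{thm:main} follows directly by invoking the combinatorial reduction of \cite{DGM}.
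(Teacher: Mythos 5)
Your proposal is a plan rather than a proof, and the plan's central mechanism fails exactly where the paper has to work hardest. You propose to verify, one $3$-dimensional dual cell at a time, that the canonical-scaling equations of \cite{DGM} are locally solvable and then to ``patch'' these local solutions globally. For the $\ssd_d^*$ family this local strategy cannot succeed: the Delone cells of $\ssd_{2m}^*$ are free sums of two $m$-cubes, so for $m\geq 3$ their $3$-dimensional faces include genuine parallelepipeds (the $3$-faces of either cube), and these survive in any Delone subdivision. Consequently no criterion phrased purely in terms of the combinatorial types of dual $3$-cells (Ordine-type, or local solvability of scaling equations cell by cell) can apply; the obstruction to a canonical scaling is global. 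The paper handles this by invoking the theorem of Garber--Gavrilyuk--Magazinov (Theorem \ref{thm:basic}): it suffices that the rational cycle space of the red Venkov graph be generated by half-belt and trivially contractible cycles. Verifying this requires an explicit global description of $G(\ssd_d^*)$ --- the split into $2m$ (or $2m+1$) integer vertices and $2^{2m-1}$ (or $2^{2m}$) half-integer vertices forming the $1$-skeleton of a cube with antipodal identification --- followed by a reduction of an arbitrary cycle, step by step, to a combination of boundaries of $2$-faces of that cube. Your ``globally consistent patching'' names this difficulty but supplies no mechanism for it, and your proposed uniform induction on $d$ via $\ssd_{d+1}^*\supset\ssd_d^*$ is left as an acknowledged obstacle rather than carried out.

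A secondary point: for $\sse_6^*$, $\sse_7^*$ and $\sse_8$ your enumeration-of-cell-types program is unnecessary. The paper's argument there is a two-line neighborliness observation: the Delone polytopes of $\sse_6^*$ (free sums of three triangles) are $2$-neighborly, so every polytope of any subdivision has only triangular $2$-faces and Zhitomirski's theorem applies; the Delone polytopes of $\sse_7^*$ and $\sse_8$ have each vertex non-adjacent to at most one other vertex, which rules out prisms and parallelepipeds among $3$-faces and lets Ordine's theorem apply. So even where your local approach could in principle work, the decisive input is a structural property of the ambient Delone cells that your proposal never identifies.
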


More direct formulation that requires more involved operations for the lattices is the following.

\begin{thm}\label{thm:main2} Let $P$ be a parallelohedron. If there exists a lattice $\Lambda$, such that
\begin{enumerate}
\item the Dirichlet-Voronoi polytope of $\Lambda$ is strongly combinatorially equivalent to $P$, and
\item the Delone decomposition of $\Lambda$ is strongly combinatorially equivalent to a Delone subdivision of one of the lattices  $\ssd_d^*$, $\sse_6^*$, $\sse_7^*$, or $\sse_8^*$,
\end{enumerate}
then $P$ satisfies the Voronoi conjecture.
\label{t:main}
\end{thm}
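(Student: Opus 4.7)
\section*{Proof proposal for Theorem \ref{thm:main2}}

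The plan is to first reduce Theorem \ref{thm:main2} to Theorem \ref{thm:main}, and then sketch how one attacks the latter. The reduction is immediate from duality of the face lattices: for a lattice $\Lambda$ whose Voronoi polytope is strongly combinatorially equivalent to $P$, the face lattice of the tiling $\mathcal T_P$ is isomorphic (via the strong combinatorial equivalence) to the face lattice of the Voronoi tiling of $\Lambda$. Since the dual cell complex of the Voronoi polytope of $\Lambda$ is exactly the Delone decomposition of $\Lambda$, condition (2) of Theorem \ref{thm:main2} says that the dual cell complex of $P$ is strongly combinatorially equivalent to a Delone subdivision of one of the four listed lattices. This is precisely the hypothesis of Theorem \ref{thm:main}, whose conclusion then gives the desired statement.

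It remains to prove Theorem \ref{thm:main}. The plan is to invoke the combinatorial reduction of the Voronoi conjecture from \cite{DGM}: for a parallelohedron $P$ it suffices to exhibit a \emph{canonical normal scaling}, that is, an assignment of positive weights to the facet normals of $\mathcal T_P$ that is compatible around every dual $3$-cell and every dual $4$-cell. These local compatibility conditions are determined by the combinatorial types of the low-dimensional dual cells; once a positive, globally consistent scaling exists, the polytope $P$ is affinely equivalent to a Voronoi polytope.

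The core task is therefore combinatorial: classify the low-dimensional cells that can appear in a Delone subdivision of $\ssd_d^*$, $\sse_6^*$, $\sse_7^*$, or $\sse_8^*$, and verify for each such cell the solvability of the local balance system and consistency of the solution across the complex. Here one would use the well-understood structure of primitive Delone cells for the root lattice duals: for $\ssd_d^*$ the Delone cells are half-cubes (or half-cube analogues), and subdivisions split them into simplices and other standard pieces; for $\sse_6^*$, $\sse_7^*$, $\sse_8^*$ there are finite, explicit lists of Delone polytopes and their possible refinements. For each appearing combinatorial type of a $3$- or $4$-cell, the balance conditions reduce to a linear system on the weights of the incident facets, and one must check that it is consistent and has a unique positive solution up to scaling. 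Uniqueness together with connectedness of the dual complex then propagates a single positive scaling across $\mathcal T_P$.

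The principal obstacle is the case analysis for $\sse_7^*$ and $\sse_8^*$, where the combinatorics of Delone cells is richest and where the subdivisions multiply the number of types that must be checked. For $\ssd_d^*$ the difficulty is uniformity in $d$: one should either find a description of subdivided half-cube cells that is independent of $d$, or proceed by an induction on $d$ based on the inductive structure of $\ssd_d^*$ and the fact that every $4$-dimensional face of a dual cell lies inside a bounded-dimensional piece. Once all low-dimensional cells are classified and the local balance systems solved, the existence of a canonical normal scaling for $P$ follows, and the Voronoi conjecture for $P$ is a direct consequence of \cite{DGM}.
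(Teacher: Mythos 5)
Your reduction of Theorem \ref{thm:main2} to Theorem \ref{thm:main} is exactly what the paper does (the two are reformulations of one another via the duality between the face lattice of $\mathcal T_P$ and the dual cell complex), so that part is fine. The problem is with your plan for Theorem \ref{thm:main} itself.

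The central gap is that you treat the existence of a canonical scaling as a \emph{local} problem: classify the low-dimensional dual cells, solve a ``balance system'' on each, and then let ``uniqueness together with connectedness'' propagate a single positive scaling across $\mathcal T_P$. This is precisely where the argument breaks down. The compatibility conditions (which, incidentally, live on dual $2$-cells, not $3$- and $4$-cells) do not determine the scaling uniquely on any local patch; the obstruction to a global consistent choice is a monodromy-type obstruction along cycles of the (red) Venkov graph $G(P)$. Local solvability around every cell is automatic and gives nothing; what must be shown is that every cycle of $G(P)$ is a rational combination of the \emph{basic} cycles (half-belt and trivially contractible cycles), on which consistency is forced --- this is Theorem \ref{thm:basic}, and verifying its hypothesis for $\ssd_d^*$ and all its Delone subdivisions is the bulk of the paper (the explicit description of $G(\ssd_{2m}^*)$ as integer vertices joined to a cube skeleton $C_{2m}$, Lemmas \ref{lem:i_vertex} and \ref{lem:i_vertex_odd}, and the case analysis in Appendix \ref{sec:appendix}). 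Your proposal contains no substitute for this global step. For $\sse_6^*$, $\sse_7^*$, $\sse_8^*$ the paper avoids the Venkov graph entirely and instead uses the ($2$-)neighborliness of the Delone cells (free sums of three triangles for $\sse_6^*$; diplo-simplices and cross-polytopes for $\sse_7^*$, $\sse_8^*$) to invoke the known local criteria of Zhitomirski and Ordine; your ``rich case analysis for $\sse_7^*$ and $\sse_8^*$'' is not needed and would not by itself yield the conjecture.

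There is also a factual error you should fix: the Delone cells of $\ssd_d^*$ are \emph{not} half-cubes; they are free sums (for $d$ even) or joins (for $d$ odd) of two cubes of dimension $\lfloor d/2\rfloor$. Half-cubes (and cross-polytopes) are the Delone cells of the root lattice $\ssd_d$, not of its dual --- the paper explicitly contrasts these two cases in Section \ref{sec:conclusion}, and the whole combinatorial analysis of $G(\ssd_d^*)$ (integer versus half-integer vertices, D-vertices corresponding to cube diagonals, etc.)\ depends on the free-sum/join structure.
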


Here strong equivalence means that not only the polytopes (or decompositions) are equivalent, but that this equivalence is respected by lattice translations. We refer to \cite[Def. 1.1]{DGM} for the precise definition in case of parallelohedra.

Another point of view is the following. If $\Lambda$ is a small perturbation of one of the lattices  $\ssd_d^*$, $\sse_6^*$, $\sse_7^*$, or $\sse_8^*$, then every parallelohedron strongly combinatorially equivalent to the Voronoi polytope of $\Lambda$ satisfies the Voronoi conjecture. However the bounds for ``small'' perturbation of each lattice basis in this description should be specified separately.

Note that we do not mention the dual lattice $\ssa_{d}^*$. The Delone decomposition for $\ssa_{d}^*$ is a triangulation and therefore there is no non-trivial subdivision. For parallelohedra with dual complexes represented by triangulations the Voronoi conjecture was established by Voronoi \cite{Vor} so this case is trivial.

The paper is organized as follows. In Section \ref{sec:vor} we introduce the main notions for parallelohedra tilings and for the associated dual cell complexes as well as give a short overview of the results related to the Voronoi conjecture that we use. In Sections \ref{sec:perturbations} and \ref{sec:notations} we briefly explain how perturbations of lattices are connected with Delone subdivisions and introduce the notations needed for subsequent sections. In Sections \ref{sec:e} through \ref{sec:d_odd} we describe the Delone decompositions of dual root lattices, mostly utilizing the approach by Conway and Sloane \cite{CS91}, and establish the properties of these decompositions needed for the main results.

In Section \ref{sec:main} we prove main results. In Section \ref{sec:conclusion} we give concluding remarks on similar approach to root lattices and general lattices.

\medskip

\section{Lattices, parallelohedra, and the Voronoi conjecture}\label{sec:vor}

In this section we introduce the definitions and provide necessary background for lattices, parallelohedra, and the Voronoi conjecture.

\begin{defin}
Let $\mathbf e_1,\ldots,\mathbf e_d$ be a basis in $\rr^d$. The set of all integer linear combinations of these vectors is called a {\it lattice} $\Lambda$, i.e.
$$\Lambda:=\{x_1\mathbf e_1+\ldots x_d\mathbf e_d|x_i\in \zz\}.$$

For given lattice $\Lambda$, the {\it Voronoi polytope} of $\Lambda$ is the polytope that consists of the points that are closer to the origin than to any other point of $\Lambda$. Copies of this polytope centered at all points of $\Lambda$ form the {\it Voronoi tiling} of $\Lambda$.

The {\it Delone decomposition} of $\Lambda$ is the tiling which is geometrically dual to the Voronoi tiling of $\Lambda$. A polytope $P$ is a full-dimensional polytope of the Delone decomposition of $\Lambda$ if and only if all vertices of $P$ are in $\Lambda$, $P$ is inscribed in a sphere, and the sphere circumscribed around $P$ does not contain other points of $\Lambda$ inside or on the boundary. 
\end{defin}

\begin{defin}
Convex $d$-polytope is called a {\it parallelohedron} if it tiles $\rr^d$ with translations. 
\end{defin}

The following Minkowski-Venkov conditions \cite{Min,Ven} are necessary and sufficient for convex polytope $P$ to be a parallelohedron.
\begin{enumerate}
\item $P$ is centrally symmetric;
\item Every facet of $P$ is centrally symmetric;
\item For every codimension 2 face $F$ of $P$, the projection of $P$ along $F$ is either parallelogram or centrally symmetric hexagon.
\end{enumerate}

Moreover, if a convex polytope $P$ satisfies these conditions, then there is a face-to-face tiling with translated copies of $P$, see \cite{Ven,McM1,McM2}. This face-to-face tiling is unique assuming that one copy is centered at the origin and in this case the centers of all tiles form a $d$-dimensional lattice. For a given parallelohedron $P$ we will denote this tiling $\mathcal T_P$.

The following conjecture was formulated by Voronoi in \cite{Vor}.

\begin{conj}
For every $d$-dimensional parallelohedron $P$ there exists a $d$-dimensional lattice $\Lambda$ such that the Voronoi polytope of $\Lambda$ and $P$ are affinely equivalent.
{\sloppy

}
\end{conj}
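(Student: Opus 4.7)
The strategy is to combine the combinatorial reduction of the Voronoi conjecture from \cite{DGM} with a case-by-case analysis of the Delone subdivisions of the listed dual root lattices. The starting observation is that Theorem~\ref{thm:main2} reduces to Theorem~\ref{thm:main}. Indeed, by condition (1) the dual cell complex of $P$ is strongly combinatorially equivalent to the dual cell complex of the Voronoi polytope of $\Lambda$, and that dual cell complex coincides with the Delone decomposition of $\Lambda$; combined with condition (2), this gives a strong combinatorial equivalence between the dual cell complex of $P$ and a Delone subdivision of $\ssd_d^*$, $\sse_6^*$, $\sse_7^*$, or $\sse_8^*$. So it is enough to verify the conclusion under the hypothesis of Theorem~\ref{thm:main}, and I will organize the rest of the proposal around that reformulation.

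For that I would invoke the reduction of \cite{DGM}: the Voronoi conjecture for $P$ follows from a set of local combinatorial conditions on its dual cell complex, consisting essentially of the existence of a consistent canonical scaling on the dual cells together with restrictions on low-dimensional dual cells. These conditions automatically hold for the Delone decomposition of any lattice, where the scaling is read off directly from the lattice metric. What remains, therefore, is to extend the verification of these conditions from the Delone decompositions of the four listed lattices to \emph{all} of their Delone subdivisions. Following Conway–Sloane \cite{CS91}, I would first write down the Delone decompositions of $\ssd_d^*$, $\sse_6^*$, $\sse_7^*$, and $\sse_8^*$ explicitly, identify the secondary cones around these Gram forms in the space of positive quadratic forms, and enumerate, up to Weyl-group symmetry, the refinements of these cones that give rise to strict Delone subdivisions (cf.\ Sections~\ref{sec:perturbations}--\ref{sec:d_odd}). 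For each such refinement I would check the local conditions of \cite{DGM} on the corresponding refined dual complex; since every subdivision refines a lattice-induced decomposition, the ambient canonical scaling should restrict coherently to each refined cell, and the restrictions on small-dimensional dual cells can be read off from the concrete combinatorial types of cells produced by each refinement.

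The hardest part is expected to be the exceptional cases, in particular $\sse_8^*=\sse_8$, whose Delone decomposition is especially rich and whose secondary cone admits many distinct refinements that must all be checked; here I expect the Weyl-group symmetry, combined with the compact description of Delone cells from \cite{CS91}, to reduce the verification to a manageable finite list, possibly with computer assistance. For the infinite family $\ssd_d^*$ a uniform inductive argument based on the zonotopal/cross-polytopal structure of its Delone cells should suffice, with the inductive step feeding off the lower-dimensional cases and ultimately bottoming out at the classical low-dimensional cases covered by \cite{GM}. A subsidiary technical point I would be careful about is tracking the word ``strong'' throughout: the reduction of \cite{DGM} depends on compatibility with the lattice translation action on the dual complex and not merely on abstract adjacency, so every combinatorial equivalence used must be verified to respect the translation action of the corresponding parallelohedral tiling.
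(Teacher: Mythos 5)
There is a fundamental mismatch between the statement you were asked to prove and what your proposal actually addresses. The statement is the Voronoi conjecture itself, quantified over \emph{every} $d$-dimensional parallelohedron $P$. The paper does not prove this; it explicitly states the conjecture is open in general and only establishes it for the restricted class of parallelohedra whose dual cell complex is strongly combinatorially equivalent to a Delone subdivision of $\ssd_d^*$, $\sse_6^*$, $\sse_7^*$, or $\sse_8^*$ (Theorems \ref{thm:main} and \ref{thm:main2}). Your first paragraph silently replaces the universal statement by that restricted hypothesis (``it is enough to verify the conclusion under the hypothesis of Theorem~\ref{thm:main}''), which is not a valid reduction: a general parallelohedron's dual cell complex need not be equivalent to any Delone subdivision of those four lattices, so no amount of case analysis of them can yield the unrestricted conjecture. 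This is the decisive gap; the statement cannot be proved by the proposed route, and indeed the paper offers no proof of it at all.

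Even read charitably as a proposal for the paper's actual main theorems, the plan diverges from (and is substantially weaker than) what the paper does. The paper never enumerates refinements of secondary cones, with or without computer assistance. Instead it proves uniform structural lemmas valid for \emph{all} Delone subdivisions at once: for $\sse_6^*$ every Delone cell is $2$-neighborly, so all dual $2$-cells of any subdivision are triangles and Zhitomirski's theorem (Theorem \ref{thm:zhitomirski}) applies; for $\sse_7^*$ and $\sse_8$ no $3$-face of any subdivision can be a prism or parallelepiped, so Ordine's theorem (Theorem \ref{thm:ordine}) applies; and for $\ssd_d^*$ the argument is an explicit analysis of the red Venkov graph showing that basic cycles generate its cycle group, invoking Theorem \ref{thm:basic}. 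Your expectation that $\sse_8$ is the hardest case is inverted --- the exceptional lattices are dispatched in a few lines, while $\ssd_d^*$ occupies the bulk of the paper including the appendix --- and there is no dimension induction for $\ssd_d^*$ bottoming out in the low-dimensional cases; the cycle-reduction argument is carried out directly in each dimension.
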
 

While there are several families of parallelohedra that are known to satisfy the Voronoi conjecture, and we again refer to \cite{GM} for more detailed survey of results, here we will concentrate on the cases that are related to the following notion of dual cells.

\begin{defin}
Let $F$ be a face of the tiling $\mathcal T_P$. The {\it dual cell} $\mathcal D(F)$ of $F$ is the collection of centers of copies of $P$ incident to $F$. 

If $F$ is a face of codimension $k$, then $\mathcal D(F)$ is called {\it dual $k$-cell}.
\end{defin}

If $P$ is the Voronoi polytope of a lattice, then the associated dual cells are the vertex sets of the faces of Delone polytopes of that lattice. However, for general parallelohedra such geometric description is not known. Nevertheless, the set of all dual cells carries the structure of a cell complex, the {\it dual cell complex}, with the face lattice dual to the face lattice of the tiling $\mathcal T_P$. 

In the view of Voronoi conjecture, the possible dual $k$-cells are expected to coincide with $k$-dimensional Delone polytopes for various lattices. This property is established only for $k\leq 3$ and is not known general. We will consider only dual 2- and 3-cells and we will refer to such cells as to 2- and 3-dimensional polytopes. Particularly, each dual 2-cell is either triangle or a parallelogram and this follows from the Minkowski-Venkov conditions.

The classification of dual 3-cell was obtained by Delone \cite{Del}, see also \cite{Mag3cells}. Each dual 3-cell belongs to one of the following geometric types.
\begin{itemize}
\item Tetrahedron;
\item Octahedron;
\item Pyramid over parallelogram;
\item Triangular prism;
\item Parallelepiped.
\end{itemize}

The following cases of the Voronoi conjecture are known for various restrictions on dual cells.

\begin{thm}[Voronoi {\cite{Vor}}]\label{thm:voronoi}
If all dual $d$-cells of a $d$-dimensional parallelohedron $P$ are simplices, then $P$ satisfies the Voronoi conjecture.
\end{thm}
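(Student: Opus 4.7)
The plan is to construct a positive definite quadratic form $Q$ on $\rr^d$ so that, with the inner product induced by $Q$, the Voronoi polytope of the translation lattice $\Lambda$ of $\mathcal T_P$ coincides with $P$; an affine change of coordinates bringing $Q$ to the standard inner product will then exhibit $P$ as affinely equivalent to a Voronoi polytope. To set up, let $\Lambda$ be the lattice of centers of tiles of $\mathcal T_P$ and, for each facet $F$ of $P$, let $\mathbf t_F\in\Lambda$ be the facet vector, with $\mathbf t_{-F}=-\mathbf t_F$. For the hyperplane carrying $F$ to coincide with the perpendicular $Q$-bisector of $\mathbf 0$ and $\mathbf t_F$, the outward normal of $F$ must be parallel to $Q\mathbf t_F$ and the offset must equal $\tfrac12 Q(\mathbf t_F,\mathbf t_F)$; this yields a linear condition on the entries of $Q$ for each pair of opposite facets.

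Next I would exploit the local structure at codimension-$2$ faces. Since every dual $d$-cell is a simplex, every dual $k$-cell is a simplex for $k\leq d$; in particular every dual $2$-cell is a triangle, so exactly three tiles meet along every codimension-$2$ face of $\mathcal T_P$, and the projection along that face is a centrally symmetric hexagon with facet vectors $\mathbf a,\mathbf b,\mathbf c,-\mathbf a,-\mathbf b,-\mathbf c$ subject to $\mathbf a+\mathbf b+\mathbf c=\mathbf 0$. The Voronoi conditions at the three corresponding facets are jointly compatible exactly when a single linear relation among the Gram entries $Q(\mathbf a,\mathbf a), Q(\mathbf b,\mathbf b), Q(\mathbf c,\mathbf c), Q(\mathbf a,\mathbf b), Q(\mathbf b,\mathbf c), Q(\mathbf c,\mathbf a)$ holds, namely the condition that the three perpendicular $Q$-bisectors in the hexagonal plane share a common point. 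Collecting one such relation per codimension-$2$ face of $\mathcal T_P$ gives a global linear system on $Q$.

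The heart of the proof is to show this system admits a positive definite solution. I would fix a dual $d$-simplex $\sigma$ at the origin, freely choose a positive definite Gram matrix on the $d$ edges of $\sigma$ emanating from a fixed vertex (these edges form a basis of $\Lambda$), and then propagate $Q$ simplex by simplex along the $1$-skeleton of the dual complex by enforcing the hexagon relation at each shared codimension-$1$ face of adjacent dual $d$-simplices. Because every dual cell is a simplex, the only cycles in that $1$-skeleton whose closure must be verified are boundaries of $2$-faces inside individual dual $d$-simplices, where closure is automatic from the circumscribed-sphere equation of the simplex. Positive definiteness of the resulting $Q$ follows because each dual $d$-simplex then admits a consistent Euclidean realization with the prescribed Gram matrix, and these realizations glue across shared facets into a globally consistent isometric embedding of the whole dual complex into $(\rr^d,Q)$. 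Once $Q$ is in hand, the facet-supporting hyperplanes of $P$ agree by construction with the perpendicular $Q$-bisectors of $\mathbf 0$ and $\mathbf t_F$, so $P$ coincides with the Voronoi polytope of $(\Lambda,Q)$ and the Voronoi conjecture for $P$ follows.

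The main obstacle is the consistency of the propagation. In the non-primitive case, non-simplicial dual $d$-cells have non-simplicial lower-dimensional faces (for instance parallelogram dual $2$-cells), which introduce additional cycles in the $1$-skeleton whose closure imposes extra linear constraints on $Q$ not implied by the hexagon relations; it is precisely the simpliciality of all dual $d$-cells that eliminates these extra obstructions and guarantees that the linear system is consistent and possesses a positive definite solution.
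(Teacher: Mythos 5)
The paper does not actually prove this statement --- it is quoted from Voronoi's memoir --- so your attempt can only be measured against the classical argument. Your sketch assembles several correct ingredients (the bisector condition that $Q\mathbf t_F$ be parallel to the facet normal $\mathbf n_F$, the six-belt relation $\mathbf a+\mathbf b+\mathbf c=\mathbf 0$, and the principle that primitivity removes the obstructions), but the core mechanism is misidentified. A positive definite form $Q$ on $\rr^d$ has only $\binom{d+1}{2}$ degrees of freedom and is completely determined by its Gram matrix on the edge basis of a single dual $d$-simplex; there is nothing left to ``propagate simplex by simplex,'' and a freely chosen Gram matrix on that first simplex will in general violate $Q\mathbf t_F\parallel\mathbf n_F$ at every other facet. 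That condition is $d-1$ scalar equations per facet pair, so the full system has on the order of $(d-1)(2^d-1)$ equations in $\binom{d+1}{2}$ unknowns, and its solvability is the entire content of the theorem, not a bookkeeping step. Likewise, saying that compatibility at a hexagon amounts to ``the three perpendicular $Q$-bisectors sharing a common point'' is vacuous: the three $Q$-bisectors of the vertices of any nondegenerate triangle meet at the $Q$-circumcenter for every positive definite $Q$. The actual constraint is that those bisectors contain the three \emph{prescribed} facet hyperplanes.

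What Voronoi propagates is not $Q$ but a canonical scaling: one positive scalar $s(F)$ per facet class subject to $s(F_1)\mathbf n_1+s(F_2)\mathbf n_2+s(F_3)\mathbf n_3=\mathbf 0$ (with appropriate signs) around each codimension-$2$ face. Primitivity enters twice: the three normals around every codimension-$2$ face span a plane and are pairwise independent, so the relation fixes the ratios $s(F_i):s(F_j)$ uniquely and positively, permitting propagation along the facet-adjacency graph; and consistency around cycles reduces to cycles around codimension-$3$ faces, whose dual cells are tetrahedra, where the check is finite and succeeds. Your proposal also omits the second half of the proof: passing from a consistent scaling to $Q$ requires the generatrix (lifting) construction, in which each tile is lifted to an affine piece in $\rr^{d+1}$ with creases prescribed by the scaling, the resulting surface is shown to be convex and to have quadratic second differences, and positive definiteness of $Q$ follows from that convexity. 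Appealing to ``gluing Euclidean realizations of the dual simplices'' does not substitute for this --- the dual cells already sit in $\rr^d$ as subsets of $\Lambda$, so there is no realization to construct and no positive definiteness to extract from one. Within the present paper's own framework the shortest route is different again: for a primitive parallelohedron all dual $3$-cells are tetrahedra, so Theorem \ref{thm:ordine} (or the generation of the cycle space of $G(P)$ by basic cycles in Theorem \ref{thm:basic}) applies, though historically those results generalize Voronoi's theorem rather than prove it.
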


\begin{thm}[Zhitomirski, {\cite{Zhi}}]\label{thm:zhitomirski}
If all dual 2-cells of a parallelohedron $P$ are triangles, then $P$ satisfies the Voronoi conjecture.
\end{thm}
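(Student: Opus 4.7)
My plan is to establish the Voronoi conjecture for $P$ via the classical equivalence between the conjecture and the existence of a \emph{canonical scaling}. Recall that a canonical scaling is a positive function $s$ on the translation classes of facets of $\mathcal T_P$ (so $s_F=s_{-F}>0$) such that at every codimension-$2$ face $F'$ of $\mathcal T_P$ the weighted facet vectors in the belt of $F'$ satisfy the linear belt relation in the plane orthogonal to $F'$. It is known that $P$ is affinely equivalent to a Voronoi polytope if and only if such a scaling exists, so it suffices to construct one.

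Under the hypothesis, every dual $2$-cell is a triangle, equivalently every codimension-$2$ face $F'$ of $\mathcal T_P$ has a \emph{$6$-belt}: the projection of $P$ along $F'$ is a centrally symmetric hexagon, the belt consists of three pairs of parallel facets with translation vectors $\pm\mathbf v_1,\pm\mathbf v_2,\pm\mathbf v_3$, and the Minkowski--Venkov condition forces $\mathbf v_1+\mathbf v_2+\mathbf v_3\in\operatorname{lin} F'$. The belt equation at $F'$ then becomes $s_1\mathbf v_1+s_2\mathbf v_2+s_3\mathbf v_3\in\operatorname{lin} F'$, which in the two-dimensional quotient orthogonal to $F'$ is a single linear relation on $s_1,s_2,s_3$. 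Its positive solutions form a one-parameter family, so fixing any one of the three weights determines the other two uniquely and positively.

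The construction will proceed by a propagation argument. I would pick a reference facet $F_0$, set $s_{F_0}:=1$, and define $s_F$ for any other facet by chaining $6$-belt rules along a path from $F_0$ to $F$ in the ``belt graph'' of $\mathcal T_P$ (whose vertices are facet classes and whose edges are $6$-belts). The main obstacle is showing that the resulting value is independent of the path chosen, i.e.\ that the monodromy around every closed loop in this graph is trivial. By standard arguments this reduces to checking loops bounding dual $3$-cells. Under our hypothesis every dual $3$-cell has only triangular $2$-faces, so by Delone's classification only tetrahedra and octahedra occur; in each of these two cases local consistency of the $6$-belt rules around the cell is a short direct calculation on $4$ or $6$ copies of $P$. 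Once consistency is verified, positivity of $s_F$ follows because the $6$-belt rule sends positive triples to positive triples, and the resulting canonical scaling produces the affine equivalence of $P$ with a Voronoi polytope.
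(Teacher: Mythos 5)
The paper does not prove this statement; it is quoted as a classical result of Zhitomirski, and the machinery you are rebuilding is essentially what the paper packages as Theorem \ref{thm:basic} (the GGM criterion). Your overall route --- canonical scaling, propagation along $6$-belts, reduction of path-independence to a generating set of cycles --- is the standard modern proof, and your observation that the hypothesis forces every dual $3$-cell to be a tetrahedron or an octahedron is correct and is exactly the right use of Delone's classification.

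The genuine gap is the sentence ``by standard arguments this reduces to checking loops bounding dual $3$-cells.'' It does not, and the paper itself signals why: the basic cycles in Theorem \ref{thm:basic} consist of trivially contractible cycles \emph{and} half-belt cycles, and the latter are not in the span of the former. Concretely, your belt graph lives on facet \emph{classes} $\{F,-F\}$, so it is the quotient of the facet-adjacency graph of $\partial P\cong S^{d-1}$ by the free antipodal involution. Simple connectivity of $S^{d-1}$ (for $d\ge 3$) does show that cycles of the \emph{covering} graph are generated by loops around codimension-$3$ faces, i.e.\ by trivially contractible cycles; but an Euler-characteristic count for the double cover gives $b_1(\bar G)=\tfrac12\bigl(b_1(G)+1\bigr)$, so the quotient has strictly more first homology than the image of $H_1(G)$. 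The extra generators are projections of paths from $F$ to $-F$ --- precisely the half-belt cycles $xyzx$ coming from a single $6$-belt (and, e.g., the digons created when two facet classes meet along two distinct codimension-$2$ face classes $F_1\cap F_2$ and $F_1\cap(-F_2)$). Your monodromy must be checked on these as well; the check is easy (the three ratios $s_y/s_x$, $s_z/s_y$, $s_x/s_z$ all come from one belt relation, so their product is $1$), but omitting it leaves the constraint $s_F=s_{-F}$ unverified and the reduction claim false as stated. Two smaller points to make explicit: the positivity of the solution of the $6$-belt relation (the three projected facet vectors sum to zero and are pairwise non-parallel, so the unique linear relation has coefficients of one sign), and the actual local consistency computations for the tetrahedral and octahedral cells, which you correctly describe as short but which carry real content --- if they were automatic, Ordine's Theorem \ref{thm:ordine} would be vacuous.
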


\begin{thm}[Ordine, {\cite{Ord}}]\label{thm:ordine}
If each dual 3-cell of a parallelohedron $P$ is either a tetrahedron, octahedron, or pyramid, then $P$ satisfies the Voronoi conjecture.
\end{thm}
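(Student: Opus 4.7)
The plan is to invoke the classical \emph{canonical scaling} reduction: for any parallelohedron $P$, the Voronoi conjecture is equivalent to the existence of a positive function $s$ on the facets of $\mathcal{T}_P$ that is invariant under lattice translations, satisfies $s(F)=s(-F)$ for opposite facets, and obeys a prescribed linear relation around every codimension-two face whose projection is a centrally symmetric hexagon. Given such a canonical scaling, the vectors $s(F)n_F$ (where $n_F$ is a chosen outer normal to $F$) span a lattice whose Voronoi polytope is affinely equivalent to $P$, so the task reduces to constructing $s$.

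First I would write down the local hexagon condition. Around a codimension-two face $G$ with hexagonal projection, six tiles of $\mathcal{T}_P$ meet, contributing three pairs of parallel facets whose normals $n_1,n_2,n_3$ in the plane perpendicular to $G$ satisfy a unique positive linear dependence $\alpha_1 n_1+\alpha_2 n_2+\alpha_3 n_3=0$; the canonical-scaling requirement is that the values of $s$ on the three pairs be proportional to $\alpha_1,\alpha_2,\alpha_3$, determining them up to a common positive factor. Parallelogram codimension-two faces impose no constraint. Next I would verify local consistency at every dual 3-cell. The hypothesis restricts the types to tetrahedra, octahedra, and pyramids over parallelograms, and for each type a direct linear-algebra check shows the hexagon relations on its 2-subfaces form a consistent system with one-dimensional positive solution ray: a tetrahedron's four hexagonal 2-faces yield a consistent rank-three system on its four incident facets; an octahedron's six hexagonal 2-faces yield a consistent rank-five system; a pyramid over a parallelogram has four apex hexagons that close, while its parallelogram 2-faces contribute no relations. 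The excluded types---triangular prism and parallelepiped---are precisely the 3-cells that could impose extra, possibly inconsistent, constraints.

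Finally I would assemble the local scalings into a global function by a monodromy argument on the 2-complex $X$ whose 0-cells are facets of $\mathcal{T}_P$ modulo translation, whose 1-cells are hexagonal codimension-two faces, and whose 2-cells are the permitted dual 3-cells. The hexagon relations define a multiplicative $\mathbb{R}_{>0}$-valued 1-cocycle on the 1-skeleton; local consistency at each 3-cell shows it is a coboundary on every 2-cell; and, because $\mathcal{T}_P$ tiles $\mathbb{R}^d$ face-to-face and $d\geq 3$, any loop in the facet-hexagon graph can be filled by a chain of dual 3-cells, so the cocycle is globally a coboundary. This produces a canonical scaling $s$ and hence the affine equivalence of $P$ with a Voronoi polytope.

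The main obstacle is precisely this last topological step: establishing that every closed loop in the facet-hexagon graph bounds a 2-chain of permitted dual 3-cells, so that pointwise consistency propagates to a global solution. This is where the restriction to tetrahedra, octahedra, and pyramids is crucial, and where I would expect to rely on the combinatorial structure of parallelohedral tilings coming from the Minkowski--Venkov conditions, together with a careful case analysis of how dual 3-cells of the permitted types can share 2-faces in $X$.
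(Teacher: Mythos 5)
The paper does not prove this statement at all: it is quoted from Ordine's thesis \cite{Ord} as a known black box, so there is no internal proof to compare against. Judged on its own, your proposal begins correctly --- the reduction to a canonical scaling, the proportionality constraints at hexagonal codimension-two faces, and the local consistency check inside tetrahedral, octahedral, and pyramidal dual 3-cells are all the right ingredients --- but the final globalization step contains a genuine gap that the rest of the argument cannot absorb.

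The gap is your claim that, because $\mathcal T_P$ tiles $\rr^d$ face-to-face, ``any loop in the facet-hexagon graph can be filled by a chain of dual 3-cells.'' This does not follow from the contractibility of $\rr^d$: the complex you build (facet classes, hexagonal codimension-two faces, permitted dual 3-cells) is an abstract quotient complex whose one-dimensional homology is not controlled by the topology of the ambient space, and in general it is \emph{not} generated by the boundaries of dual 3-cells. Indeed, the statement you are asserting is essentially the hypothesis of Theorem \ref{thm:basic} (the GGM criterion that the cycle group of the red Venkov graph be generated by basic cycles), and the entire body of the present paper is devoted to verifying that hypothesis for specific lattices precisely because it is not automatic. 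Worse, the paper explicitly records that Theorems \ref{thm:basic} and \ref{thm:ordine} are logically independent --- neither implies the other, and their common generalization requires the Venkov complex machinery of \cite[Thm.~5.1]{DGM}. So an argument that derives Ordine's theorem by showing all loops are spanned by 3-cell boundaries cannot be correct: the hypothesis ``no prisms or parallelepipeds among dual 3-cells'' does not imply that cycle-generation property. Ordine's actual proof must (and does) exploit the local 3-cell types in a more delicate way than filling loops; your ``main obstacle'' is not a technical detail to be supplied but the point where this strategy breaks down.
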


The three theorems above state that if certain faces of parallelohedron satisfy some local combinatorial condition, then this parallelohedron satisfies the Voronoi conjecture. In paper \cite{GGM}, it was shown that a certain global combinatorial condition implies the Voronoi conjecture as well. We will introduce this condition in terms of the red Venkov graph as described in \cite{Gar4dim} and \cite{DGM}.

\begin{defin}
For a fixed parallelohedron $P$ we define the following graph $G(P)$, the {\it red Venkov graph} for $P$.

The vertices of $G(P)$ are identified with equivalence classes of dual 1-cells for $P$; two 1-cells are equivalent if they differ by translation on a vector from the lattice associated with $P$.

Two distinct vertices $x$ and $y$ of $G(P)$ are connected with an edge if and only if there is a triangular dual 2-cell of $P$ that is incident to two dual 1-cells equivalent to (1-cells associated with) $x$ and $y$.

If $\Lambda$ is a lattice and $P$ is the Voronoi parallelohedron for $\Lambda$, then we also refer to $G(P)$ as the red Venkov graph for $\Lambda$ and denote it as $G(\Lambda)$.
\end{defin}


The graph $G(P)$ encodes which pairs facets of $P$ share a primitive face of codimension 2, i.e. a face with triangular dual cell. A similar approach can be used to construct the full Venkov graph that tracks both types of possible shared faces of codimension 2, see \cite{Ord}. However, for our purposes the red Venkov graph is more useful as the structure of cycles of this graph can be used to guarantee the Voronoi conjecture using only combinatorics of $P$.

\begin{defin}
Suppose vertices $x$, $y$, and $z$ of $G(P)$ correspond to edges of one triangular dual 2-cell of $P$. Then the cycle $xyzx$ of $G(P)$ is called {\it half-belt cycle}.

Let $D$ be a dual 3-cell of $P$. Suppose that the origin belongs to $D$ and that all dual 2-cells of $D$ that contain the origin are triangles. This means that $D$ is either a tetrahedron, octahedron, or pyramid and in the latter case the origin is the apex of the pyramid. Let $C$ be the cycle of $G(P)$ that consists of vertices and edges corresponding to the sequence of dual 1-cells of $D$ around the origin. Then $C$ is called {\it trivially contractible cycle}. 

All together these families of cycles are called the {\it basic cycles} of $G(P)$. We also refer to Figure \ref{pict:basic} for a visualized description of both types of cycles.

\begin{center}
\begin{figure}
\includegraphics[width=0.9\textwidth]{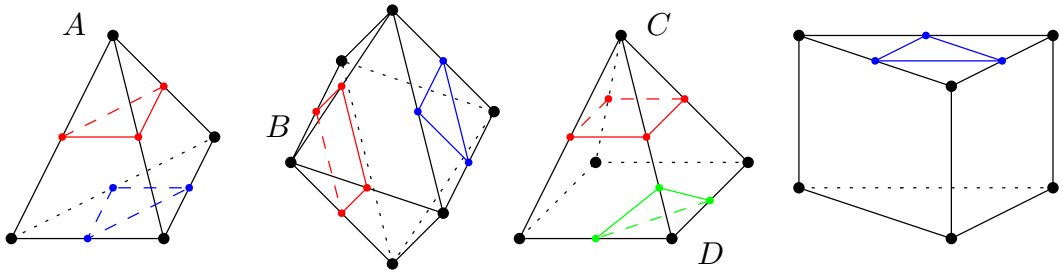}
\caption{Some of possible basic cycles. The black polytopes represent possible dual 3-cells except the cubic cell. The colored dots correspond to dual 1-cells and therefore the edges of the red Venkov graph. The red cycles show trivially contractible cycles around vertices $A$, $B$, and $C$ of the corresponding dual 3-cells. These cycles need to be understood as cycles of the corresponding dual 1-cells. Particularly, all dual 2-cells incident to $A$, $B$, and $C$ in these dual 3-cells are triangles. The blue cycles show selected half-belt cycles; each of these cycles include three dual 1-cells within one triangular dual 2-cell. The green cycle around vertex $D$ is not a trivially contractible cycle because one of the dual 2-cells incident to $D$ in the given pyramid is a parallelogram\protect\footnotemark.}
\label{pict:basic}
\end{figure}
\end{center}
\end{defin}

\footnotetext{Actually, there will be no edge associated with the dashed green edge in the corresponding Venkov graph because the two green vertices in the base of the pyramid do not correspond to dual 1-cell within one triangular dual 2-cell.}

We will use the following adaptation of \cite[Thm. 4.6]{GGM}. It was also reformulated in a similar way in \cite{Gar4dim} and \cite{DGM}.

\begin{thm}\label{thm:basic}
Let $\mathcal G$ be the group of cycles of $G(P)$ with rational coefficients\footnote{That is $\mathcal{G}$ is the group of one-dimensional rational homologies if we treat $G(P)$ as simplicial complex.}. If $\mathcal G$ is generated by the set of basic cycles, then the Voronoi conjecture is true for $P$.
\end{thm}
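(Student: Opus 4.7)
The plan is to use the method of canonical scaling, which reduces the Voronoi conjecture for $P$ to the existence of a consistent positive weighting on the facet classes of $P$. For each vertex $x$ of $G(P)$ --- equivalently, each translation class of dual 1-cells, or each pair of opposite facets of $P$ --- we seek a positive real $\sigma(x)$ such that the rescaled facet vectors arise as the gradients of a positive-definite quadratic form. Existence of such a weighting is known to imply that $P$ is affinely equivalent to the Voronoi polytope of the lattice of tile centers in the corresponding inner product, so the whole proof is aimed at constructing $\sigma$.

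First, I would encode the local constraints imposed by triangular dual 2-cells. At each such cell the three incident facets of $P$ form a primitive belt of length three, and the classical analysis of codimension-2 belts gives a unique (up to scalar) linear relation among the three corresponding values of $\sigma$. Thus every edge $xy$ of $G(P)$ determines a positive multiplier $m_{xy}$ with the rule $\sigma(y)=m_{xy}\sigma(x)$. Taking logarithms turns this into an additive cochain $c \in C^1(G(P);\qqq)$, and analytic continuation of a candidate $\sigma$ along a cycle $\gamma$ of $G(P)$ is well-defined precisely when $c(\gamma)=0$.

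Next, I would verify that $c$ vanishes on every basic cycle. For a half-belt cycle $xyzx$ around a single triangular 2-cell this is immediate, because the three pairwise continuation rules are the cyclic consequences of the one linear relation among $\sigma(x),\sigma(y),\sigma(z)$. For a trivially contractible cycle around the origin in a dual 3-cell $D$, I would treat the three possible types separately --- tetrahedron, octahedron, and pyramid with apex at the origin --- and show that the local continuation rules glue consistently around the apex. In the tetrahedron and octahedron cases this follows because the cell admits a Delone realization by some lattice whose Voronoi tiling locally matches $\mathcal{T}_P$ near $D$ by the Minkowski--Venkov conditions, and on a Voronoi cell the scaling is automatically compatible. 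Once $c$ vanishes on all basic cycles, the hypothesis that basic cycles generate $\mathcal{G}$ over $\qqq$ forces $c \equiv 0$ on $\mathcal{G}$, so $c$ is exact and the global $\sigma$ exists.

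The final step is to realize the global $\sigma$ geometrically: one lifts it to a positive-definite quadratic form on the ambient space such that, in the associated inner product, each facet of $P$ is the perpendicular bisector of the segment joining the origin to the center of the adjacent tile, which is the Voronoi property and yields the desired affine equivalence. The main obstacle is the pyramidal case of a trivially contractible cycle: a pyramid over a parallelogram is not in general inscribed in a sphere, so the local consistency of $\sigma$ around its apex cannot be borrowed from an intrinsic Delone property and must instead be deduced purely from the Minkowski--Venkov projection condition at codimension-2 faces, together with the combinatorial structure of the two triangular 2-cells meeting at the apex.
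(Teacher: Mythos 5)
First, a point of reference: the paper does not prove Theorem \ref{thm:basic} at all; it imports it as an adaptation of \cite[Thm.~4.6]{GGM} (reformulated in \cite{Gar4dim} and \cite{DGM}), so the comparison has to be against the argument in those sources. Measured against that argument, your overall architecture is the right one: reduce the Voronoi conjecture to the existence of a canonical scaling, encode the unique-up-to-scalar relation at each primitive codimension-2 face as a multiplicative gain on the edges of $G(P)$ (an additive $1$-cochain $c$ after taking logarithms), note that $c$ vanishes on half-belt cycles essentially by definition, and conclude that if the basic cycles generate $\mathcal G$ over $\qqq$ and $c$ vanishes on them, then $c\equiv 0$, a global scaling exists, and Voronoi's generatrissa construction finishes the proof. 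All of that is faithful to the cited proof.

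The genuine gap is in the one step that carries real content: the vanishing of $c$ on trivially contractible cycles. You explicitly leave the pyramidal case unresolved (``the main obstacle''), yet that case is an indispensable part of the theorem --- the applications in Sections \ref{sec:d_even}--\ref{sec:main} of this paper repeatedly use trivially contractible cycles around the apex of a pyramidal dual $3$-cell (see the right part of Figure \ref{pict:remove_integer}), so a proof omitting it establishes a strictly weaker statement. Moreover, your justification for the tetrahedron and octahedron cases begs the question: the Minkowski--Venkov conditions do not supply ``a lattice whose Voronoi tiling locally matches $\mathcal T_P$ near $D$'' --- the assertion that the tiling is locally affinely Voronoi around a codimension-3 face is exactly the local form of what is being proved. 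The correct argument (as in \cite{GGM}, and underlying \cite{Ord}) constructs a local canonical scaling on the star of the codimension-3 face directly: project along the face, invoke Delone's classification of dual $3$-cells, and exhibit explicit consistent coefficients for the relations at the surrounding primitive codimension-2 faces; this treats all three shapes, pyramid included, uniformly. Without that lemma none of the three cases is actually established, so the proposal as written does not yet prove the theorem.
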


Theorems \ref{thm:basic} and \ref{thm:ordine} both generalize Theorems \ref{thm:voronoi} and \ref{thm:zhitomirski}. That is, if the conditions of Theorem \ref{thm:voronoi} or Theorem \ref{thm:zhitomirski} can be applied to parallelohedron $P$, then conditions of Theorem \ref{thm:basic} and Theorem \ref{thm:ordine} can be applied to $P$ as well. While there is no similar direct dependence between Theorems \ref{thm:basic} and \ref{thm:ordine}, there is a generalization of both. This generalization is given in \cite[Thm. 5.1]{DGM}. It also uses the combinatorics of $P$ to construct another simplicial complex, the Venkov complex of $P$, and its cohomologies. 

In this paper we will not use the Venkov complex, but our proof of Theorem \ref{thm:main} will use that the corresponding parallelohedra satisfy conditions of either Theorem \ref{thm:ordine} or Theorem \ref{thm:basic}. Consequently, the analogue of \cite[Thm. 5.1]{DGM} holds for such parallelohedra as well. 

\section{Perturbations of lattices}\label{sec:perturbations}


In this section we show how Delone subdivisions defined in the introduction are connected with perturbations of lattices. While this observation does not participate in the proof of any results and serves only for more perceptible reformulation of Theorem \ref{thm:main}, we think it should be presented here for completeness. The approach is largely based on \cite[Sect. 2.6]{Val}.

Let $\Lambda$ be a $d$-dimensional lattice and let $G$ be the Gram matrix of some basis $\mathcal A$ of $\Lambda$. Then the Delone decomposition of $\Lambda$ is affinely equivalent to the Delone decomposition of the integer lattice $\zz^d$ with respect to the metric defined by the quadratic form $\mathbf x^tG\mathbf x$. This is easy to see once we consider the affine transformation from $\mathcal A$ to the standard basis of $\zz^d$.

In this representation, we consider the ellipsoids of the form $$ (\mathbf x-\mathbf a)^tG(\mathbf x-\mathbf a)=r^2$$ that do not contain integer points inside. Then the integer points on such ellipsoids correspond to integer combinations of the vectors from $\mathcal A$ that constitute Delone polytopes for $\Lambda$.

Now if $\Lambda'$ is another lattice such that the Delone decomposition of $\Lambda'$ is a Delone subdivision of $\Lambda$, then we can choose a basis $\mathcal A'$ of $\Lambda'$ such that the linear transformation from $\mathcal A'$ to $\mathcal A$ gives the required subdivision. Let $G'$ be the Gram matrix of $\mathcal A'$. In that case, for empty ellipsoid of $\zz^d$ with respect to $G'$, the induced set of integer points is contained in some empty ellipsoid for $G$.

Let $P'$ and $P$ be some polytopes with integer vertices such that $P'\subseteq P$ and suppose that $P'$ and $P$ correspond to Delone polytopes of $\Lambda'$ and $\Lambda$ respectively. Then there are ellipsoids
$$ (\mathbf x-\mathbf a')^tG'(\mathbf x-\mathbf a')=r'^2 \qquad \text{and} \qquad (\mathbf x-\mathbf a)^tG(\mathbf x-\mathbf a)=r^2$$ that correspond to $P'$ and $P$. That is every vertex of $P'$ satisfies the first equality while every other point of $\zz^d$ satisfies the inequality
$$(\mathbf x-\mathbf a')^tG'(\mathbf x-\mathbf a')>r'^2.$$ A similar property holds for $P$ and the second ellipsoid.

For small $\varepsilon>0$, the lattice with the Gram matrix $G+\varepsilon G'$ can be seen as a perturbation of $\Lambda$ as its basis is a perturbation of $\mathcal A$. On the other hand, if we add two equations of ellipsoids above taking the first one with coefficient $\varepsilon$, we will get an ellipsoid with the quadratic part defined by $G+\varepsilon G'$. The new ellipsoid defines the same polytope $P'$. 

Repeating the same arguments for all polytopes of the Delone decomposition for $\Lambda'$, we get that the Delone decomposition of the new lattice coincides with the one for $\Lambda'$. Thus every Delone subdivision of $\Lambda$ can be seen as a Delone decomposition for a small perturbation of $\Lambda$.

The converse statement is also true and every small enough perturbation of the Gram matrix $G$ gives a lattice that defines some Delone subdivision of $\Lambda$.

\section{Some operations on polytopes}\label{sec:notations}

In the following sections we use the following notations for standard constructions for convex polytopes. We refer to \cite{HRGZ} for more details.

Let $P$ and $Q$ be two convex polytopes in complementary subspaces of $\rr^d$ such that $P$ and $Q$ both contain the origin $\mathbf 0$ in their (relative) interiors. The origin is the only point of intersection of the ambient subspaces for $P$ and $Q$ as these subspaces are complementary. Then the polytope $P\oplus Q:=\conv(P\cup Q)$ is called the {\it free sum} of $P$ and $Q$. Another term which is used for this construction is ``join'', see \cite{CS91}, but we reserve it for the next construction.

The faces of $P\oplus Q$ (except the free sum itself) are all convex hulls of the form $\conv(F_P\cup F_Q)$ where $F_P$ and $F_Q$ are faces of $P$ and $Q$ respectively; note that the faces can be empty but cannot coincide with $P$ or $Q$. In a similar way we define the free sum of three or more convex polytopes.

For the second construction, let $P$ and $Q$ be two convex polytopes in skewed subspaces. That is, the ambient subspaces do not intersect and the associated linear subspaces have trivial intersection. Then the polytope $P*Q:=\conv(P\cup Q)$ is called the {\it join} of $P$ and $Q$. In \cite{CS91}, the term ``separated join'' is used.

The faces of $P*Q$ are all possible convex hulls $\conv(F_P\cup F_Q)$ including empty faces and faces that are equal to either $P$ or $Q$.

%
%
%
%



\section{Lattices $\mathsf{E}_6^*$, $\mathsf{E}_7^*$, and $\mathsf{E}_8$}\label{sec:e}

For the lattices $\mathsf{E}_6^*$, $\mathsf{E}_7^*$, and $\sse_8^*=\mathsf{E}_8$, our main result follows from the structure of the associated Delone decompositions. We refer to \cite{CS91} and \cite{CS} for more details on the lattices itself.

\subsection{The lattice $\sse_6^*$}

\begin{lemma}\label{lem:e6}
If $P$ is a polytope in some Delone subdivision of $\sse_6^*$, then all two-dimensional faces of $P$ are triangles.
\end{lemma}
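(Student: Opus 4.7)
The plan is to verify the lemma in two stages: first for the Delone decomposition of $\sse_6^*$ itself, and then to extend this conclusion to arbitrary Delone subdivisions. For the first stage, I would invoke the Conway--Sloane description \cite{CS91} of the Delone cells of $\sse_6^*$, under which each full-dimensional Delone cell is expressible as a join (or a free sum) of standard simplices $\alpha_k$. Using the face-structure description recalled in Section \ref{sec:notations}, a 2-face of $\alpha_{k_1}*\cdots*\alpha_{k_m}$ has the form $\conv(F_1\cup\cdots\cup F_m)$ with total dimension 2, and the only possibilities are a 2-face of a single simplex factor, an edge of one factor joined with a vertex of another, or three vertices taken one from each of three factors. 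In every case the resulting 2-face is a triangle, and an analogous argument handles the free-sum construction.

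For the second stage I would use the vertex-preservation observation from Section \ref{sec:perturbations}: for small enough perturbations, the vertex set of the subdivision coincides with the vertex set of the original Delone decomposition. Consequently, if $P$ is a polytope in a Delone subdivision of $\sse_6^*$ contained in some Delone cell $D$, then every vertex of every 2-face of $P$ is a vertex of $D$. The key reduction is then combinatorial rather than metric: it suffices to show that no four vertices of $D$ lie in convex position inside a common 2-plane, for then every 2-face of a subdivision cell contained in $D$ has at most three vertices and is therefore a triangle.

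The last stage is an affine-independence verification inside the join/free-sum decomposition of each Delone cell type of $\sse_6^*$. Given three vertices of $D$, I would distribute them across the simplex factors of the join and examine the 2-plane they span: in a join $P_1*P_2$ the two ambient subspaces are skew, so the 2-plane spanned by vertices taken from both factors meets each factor's subspace in a proper affine flat (a point or a line), and additional vertices can only lie there via an affine-dependence inside a simplex, which is impossible. A parallel case analysis works for the free-sum construction, where the two factors meet only at the origin of their joint span.

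The main obstacle I anticipate is step 3, the coplanarity-freeness verification, because it is essentially a case check that must be done carefully for each join/free-sum structure appearing among the Delone cells of $\sse_6^*$, and in particular it must handle mixed selections of vertices drawn from more than two factors, where the 2-plane intersects several simplex subspaces nontrivially. Once this combinatorial geometric fact is in place, the lemma follows immediately from the reduction in stage two.
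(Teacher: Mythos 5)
Your argument is correct and rests on the same two pillars as the paper's proof: the Conway--Sloane description of every full-dimensional Delone cell of $\sse_6^*$ as a free sum of three triangles, and the observation that the vertices of any cell of a Delone subdivision are lattice points lying in some Delone cell $Q$ and hence (by the empty-sphere condition, which you should state explicitly) vertices of $Q$. Where you diverge is in the combinatorial fact you extract from the free-sum structure. The paper observes that $Q$ is $2$-neighborly --- $\conv(F_1\cup F_2\cup F_3)$ is a face of the free sum for any choice of proper faces $F_i$ of the factors, so every pair of vertices spans an edge --- that $2$-neighborliness is inherited by any polytope whose vertices form a subset of the vertices of $Q$, and that a $2$-neighborly polytope has only triangular $2$-faces. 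You instead show directly that no four vertices of $Q$ are coplanar and in convex position, by distributing the four vertices over the three triangle factors and using that the factors' linear spans intersect only at the origin, which lies in the relative interior of each triangle. The two key facts are equivalent (four coplanar vertices in convex position would force two edges of a $2$-neighborly polytope to cross at an interior point), so both routes are sound; the paper's is shorter and purely face-lattice-theoretic, while yours makes the coplanarity obstruction explicit at the cost of a case analysis over the partitions $3{+}1$, $2{+}2$, $2{+}1{+}1$ of the four vertices among the factors. Your first stage (the unsubdivided decomposition) is subsumed by the general case and can be dropped.
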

\begin{proof}
Let $Q$ be a six-dimensional polytope of the Delone decomposition of $\sse_6^*$. Then $Q$ is a free sum of three equilateral triangles, see \cite{CS91}. 
The polytope $Q$ is a 2-neighborly polytope, that is every pair of vertices of $Q$ is connected by an edge.

If the vertex set of $P$ is a subset of the vertex set of $Q$, then $P$ is also a 2-neighborly polytope. Hence all two-dimensional faces of $P$ are triangles.
\end{proof}

\subsection{The lattice $\sse_7^*$}
\begin{lemma}\label{lem:e7}
If $P$ is a polytope in some Delone subdivision of $\sse_7^*$, then no three-dimensional face of $P$ is a triangular prism or a parallelepiped.
\end{lemma}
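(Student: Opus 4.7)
My plan is to mirror the strategy of Lemma \ref{lem:e6}: identify a combinatorial property of each seven-dimensional Delone cell $Q$ of $\sse_7^*$ that (i) is automatically inherited by any sub-polytope $P$ whose vertex set satisfies $V(P)\subseteq V(Q)$, and (ii) rules out triangular prism and parallelepiped three-faces. Since $P$ belongs to a Delone subdivision of $\sse_7^*$, its vertex set lies in $V(Q)$ for some full-dimensional Delone cell $Q$, so the problem reduces to a purely combinatorial statement about $Q$.

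The property I would use is the following: no nonzero translation vector $t \in \rr^7$ arises as $v-u$ for three or more distinct unordered vertex-pairs $\{u,v\}\subseteq V(Q)$. Equivalently, each direction supports at most two parallel ``chords'' among pairs of vertices of $Q$. This condition is clearly inherited by any subset of $V(Q)$, so it passes from $Q$ to $P$. Now a triangular prism has three edges that are pairwise parallel and share a common translation vector, while a parallelepiped has four such edges in each of three directions; hence a three-face $F\subseteq P$ of either shape would produce at least three vertex-pairs in $V(Q)$ with a common difference vector, violating the property. Crucially, this property does \emph{not} exclude pyramids over parallelograms or octahedra, whose parallel edges occur only in pairs, which is consistent with the fact that the lemma does not attempt to exclude those shapes.

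The verification of the ``at most two parallel vertex-pairs per direction'' property for the Delone cells of $\sse_7^*$ is the main step, and the main technical obstacle. I would proceed by realizing $V(Q)$ in explicit coordinates using the description of the Delone decomposition from \cite{CS91}, and then exploit the action of the Weyl group $W(\sse_7)$ on $Q$ to organize the set of difference vectors $v-u$ (with $u,v\in V(Q)$ distinct) into a small number of orbits. For a single representative $t$ of each orbit one counts the number of vertex-pairs realizing the difference $t$ and checks that this count is at most two. The difficulty, relative to the $\sse_6^*$ case, is that we cannot reduce to a single neighborliness statement: one must track \emph{all} pairwise differences of $V(Q)$, not only the edges of $Q$, and so the case analysis is genuinely longer, even after using the symmetry group to collapse orbits.
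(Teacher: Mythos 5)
Your reduction to a single full-dimensional Delone cell $Q$ (every polytope of a Delone subdivision has its vertices among the vertices of some cell of the original decomposition) and the inheritance argument are sound, and your parallel-chord criterion would indeed exclude the two forbidden shapes: in Delone's classification the quadrilateral $2$-faces of a triangular-prism or parallelepiped $3$-cell are parallelograms, so such a face carries three (resp.\ four) edges that are translates by one common vector, i.e.\ at least three unordered vertex pairs with the same difference. The genuine gap is that the central claim --- no nonzero vector arises as the difference of three or more unordered vertex pairs of $Q$ --- is only announced, with a plan involving explicit coordinates and $W(\sse_7)$-orbits; the step that carries all the content is deferred. The claim is in fact true and needs neither coordinates nor the Weyl group. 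By \cite{CS91}, $Q$ is the diplo-simplex $\conv\{\pm v_1,\dots,\pm v_8\}$, where $v_1,\dots,v_8$ are the vertices of a $7$-simplex centered at the origin, so the only linear dependence among $v_1,\dots,v_8$ is a multiple of $v_1+\dots+v_8=0$. An equality $\varepsilon v_i-\varepsilon' v_j=\delta v_k-\delta' v_l$ between two vertex differences (signs $\pm1$) is a linear relation supported on at most four of the $v_i$, hence must be the trivial one; matching index--sign patterns then shows each difference class $\pm t$ is realized by at most two unordered pairs (namely a pair and its antipodal image). Supplying this two-line argument closes your proof.

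For comparison, the paper's route is shorter and uses strictly less information about $Q$: it records only that each of the $16$ vertices of the diplo-simplex is joined by an edge of $Q$ to all other vertices except its antipode. Since an edge of $Q$ between two vertices of $P$ survives as an edge of $P$, and hence of any face of $P$ containing both endpoints, every vertex of a $3$-face $P'$ is non-adjacent to at most one other vertex of $P'$; but in a triangular prism each vertex is non-adjacent to two others, and in a parallelepiped to four. So the paper needs only the edge graph of $Q$, whereas you need control of all pairwise vertex differences --- a stronger input, though one that, as noted above, is much cheaper to verify than your proposal anticipates.
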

\begin{proof}
Let $Q$ be a seven-dimensional polytope of the Delone decomposition of $\sse_7^*$. Then $Q$ is a seven-dimensional {\it diplo-simplex}, i.e. the convex hull of a seven-dimensional simplex and its symmetric copy; see \cite{CS91}. The polytope $Q$ has 16 vertices and each vertex of $Q$ is incident to 14 edges corresponding to each other vertex of $Q$ except the opposite one.

Suppose the vertex set of $P$ is a subset of the vertex set of $Q$ and let $P'$ be a three-dimensional face of $P$. Then every vertex of $P'$ is not connected by an edge with at most one other vertex of $P'$, thus $P'$ cannot be a triangular prism or a parallelepiped.
\end{proof}

\subsection{The lattice $\sse_8$}

\begin{lemma}\label{lem:e8}
If $P$ is a polytope in some Delone subdivision of $\sse_8$, then no three-dimensional face of $P$ is a triangular prism or a parallelepiped.
\end{lemma}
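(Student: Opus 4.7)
The plan is to adapt the proof of Lemma \ref{lem:e7} to the lattice $\sse_8=\sse_8^*$. The first step is to invoke the description of the Delone cells of $\sse_8$ from \cite{CS91}: these cells come in two orbits, one being the 8-simplex $\alpha_8$ and the other a second polytope $Q$ which should enjoy the same crucial combinatorial feature used in the $\sse_7^*$ argument, namely that any two distinct vertices of $Q$ are joined by an edge of $Q$ with at most one exception per vertex (the antipode in a centrally symmetric cell). For $\alpha_8$ the property is trivial, since every pair of vertices spans an edge.

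Granted this $1$-skeleton property, the rest of the argument runs exactly as in Lemma \ref{lem:e7}. Let $P$ be a polytope in some Delone subdivision of $\sse_8$, so that the vertex set of $P$ is contained in the vertex set of some Delone cell $Q$ of $\sse_8$. Then the edge-adjacency structure on the vertex set of any three-dimensional face $P'$ of $P$ inherits the bound: each vertex of $P'$ is non-adjacent to at most one other vertex of $P'$. On the other hand, in a triangular prism every vertex is non-adjacent to exactly two of the remaining five vertices, and in a parallelepiped every vertex is non-adjacent to exactly four of the remaining seven vertices. Both counts exceed the allowed one, so $P'$ can be neither a triangular prism nor a parallelepiped.

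The main obstacle is the preparatory bookkeeping in the first step: one must extract from \cite{CS91} the precise combinatorial type of the non-simplicial Delone cell of $\sse_8$ and verify that its $1$-skeleton is the complete graph minus a perfect matching (or at least that each vertex is non-adjacent to at most one other). Once this is in hand, the argument requires no new combinatorial ideas beyond those already used for $\sse_7^*$, and the lemma follows by the same short inadmissibility count on the $3$-dimensional Delone types classified in Section \ref{sec:vor}.
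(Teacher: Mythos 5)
Your proposal is correct and follows essentially the same route as the paper: the Delone cells of $\sse_8$ are the $8$-simplex and the $8$-dimensional cross-polytope, the latter having exactly the $1$-skeleton you anticipated (each of its $16$ vertices is joined to all others except its antipode), and the non-adjacency count then rules out the prism and parallelepiped exactly as in the $\sse_7^*$ case. The only item you left open --- identifying the non-simplicial cell --- is handled in the paper by the same citation to Conway--Sloane that you propose.
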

\begin{proof}
Let $Q$ be an eight-dimensional polytope of the Delone decomposition of $\sse_8$. Then $Q$ is either an eight-dimensional cross-polytope or an eight-dimensional simplex; see \cite{CS91}. In the first case,  the polytope $Q$ has 16 vertices and each vertex of $Q$ is incident to 14 edges corresponding to each other vertex of $Q$ except the opposite one. In the second case, the polytope $Q$ is a 2-neighborly polytopes. In both cases we can use the same arguments as in Lemma \ref{lem:e7} for the three-dimensional faces of $P$.
\end{proof}

\medskip

\section{Lattice $\mathsf{D}_{2m}^*$}\label{sec:d_even}

Our next goal is to give a description for the red Venkov graph of $\ssd_d^*$. We start from the even case $d=2m$.

For the lattice $\ssd_{2m}^*$ (and the lattice $\ssd_{2m+1}^*$ below) we will need more detailed structure of the Delone decomposition and associated red Venkov graph. We assume that $m\geq 3$ as the case of $\ssd_4^*=\ssd_4$ can be viewed as part of the four-dimensional case which was studied in \cite{Gar4dim}.

Geometrically, the lattice $\ssd_d^*$ can be constructed as the set of integer or half-integer points where all coordintates are not integers. That is $$\ssd_d^*=\zz^d\cup(\left(\frac12,\ldots,\frac12\right)+\zz^d)$$ It can also be seen as a high-dimension analogue of the three-dimensional BCC lattice. 

For two lattice points $x$ and $y$ we will use notation $x\oplus y$ for the parity class represented by $x+y$, i.e. $$x\oplus y:=(x+y)+\ssd_{d}^*/2\ssd_d^*.$$

\subsection{The Delone decomposition of $\ssd_{2m}^*$} \label{sec:del2m} \hfill \smallskip

Every polytope of the Delone decomposition of $\ssd_{2m}^*$ is a free sum of two $m$-dimensional cubes; see \cite{CS91}. More precisely, If $\mathbf{u}$ is a point in $\rr^{2m}$ with $m$ integer coordinates and $m$ half-integer coordinates, then the sphere of radius $\sqrt{\dfrac{m}{4}}$ centered at $\mathbf{u}$ contains $2^{m+1}$ points of $\ssd_{2m}^*$ and no points of $\ssd_{2m}^*$ inside. These $2^{m+1}$ points can be obtained by either changing the $m$ integer coordinates of $\mathbf{u}$ by $\pm\frac12$ (the first cube with half-integer vertices), or by changing the $m$ half-integer coordinates of $\mathbf{u}$ by $\pm\frac12$ (the second cube with integer vertices). There are ${\displaystyle {2m-1 \choose m-1}=\frac 12{2m \choose m}}$ translational classes of such polytopes in the Delone decomposition of $\ssd_{2m}^*$ corresponding to all possible choices of $m$ half-integer coordinates of $\mathbf{u}$; complementary choices define the same translational class as the corresponding points differ by a vector from $\ssd_{2m}^*$.

In the sequel, we will denote as $P_\mathbf{u}$ the Delone polytope of the Delone decomposition of $\ssd_{2m}^*$ centered at an appropriate point $\mathbf{u}$. Particularly, coordinates of $\mathbf{u}$ will be $0$s and $\frac12$s in most of the cases; these points represent all different classes of Delone polytopes incident to the origin.


\subsection{The red Venkov graph of $\ssd_{2m}^*$} \label{sec:graph2m} \hfill \smallskip


The vertices of the red Venkov graph correspond to the edges of the Delone decomposition. Hence, the graph $G(\ssd_{2m}^*)$ has $2m+2^{2m-1}$ vertices. First $2m$ vertices correspond to edges between vertices of one cube in $P_\mathbf u$ for all relevant $\mathbf u$; each such edge connects two points that differ by a vector $\mathbf e$ with exactly one non-zero coordinate which is $1$. We call these vertices the {\it integer} vertices of $G(\ssd_{2m}^*)$ and denote each vertex as $\mathbf e$ for appropriate $\mathbf e$. 

The remaining $2^{2m-1}$ vertices correspond to edges between two different cubes in the free sums of cubes. These edges connect vertices that differ by vectors $\mathbf t$ with all coordinates $\pm \frac 12$; opposite vectors $\mathbf t$ and $-\mathbf t$ define translationally equivalent edges and therefore the same vertex of $G(\ssd_{2m}^*)$. We say that these vertices are {\it half-integer} vertices of $G(\ssd_{2m}^*)$ and denote them $\mathbf t$ for appropriate $\mathbf t$. We can associate the half-integer vertices of $G(\ssd_{2m}^*)$ with the vertices of $2^{2m}$-dimensional cube $C_{2m}:=\left[-\frac12,\frac12 \right]^{2m}$ after identification of pairs of opposite vertices.

The edges of the graph $G(\ssd_{2m}^*)$ come from triangular faces of the Delone decomposition of $\ssd_{2m}^*$. Two vertices of $G(\ssd_{2m}^*)$ are connected with an edge if and only if there is a traingular face of the Delone decomposition of $\ssd_{2m}^*$ whose two edges correspond to these vertices.

Each triangular face of a free sum of two cubes has two vertices in one cube and one vertex in the other cube. Hence no edge of $G(\ssd_{2m}^*)$ connects two integer vertices. Additionally, two half-integer vertices of $G(\ssd_{2m}^*)$ (the vertices corresponding to edges between two different cubes) may be connected with an edge only if the corresponding vectors differ in exactly one coordinate; the vector of difference corresponds to the remaining edge of the triangular face of the free sum between two vertices of one cube.

On the other hand, for every integer vertex ${\mathbf e}$ and every half-integer vertex ${\mathbf t}$, a representative of the class $\mathbf t \oplus \mathbf e$ corresponds to a half-integer vertex of $G(\ssd_{2m}^*)$. These three vertices of the graph correspond to edges of one triangle of the Delone decomposition of $\ssd_{2m}^*$.

In order to justify that, we may assume that $\mathbf e=(1,0,\ldots,0)$. Let $x$ be the point in $\mathbb{R}^{2m}$ such that its first coordinate is $\frac12$, coordinates from $2$ to $m$ coincide with the corresponding coordinates of $\mathbf t$ (or $\mathbf t\pm\mathbf e$), and the rest of coordinates are zeros. Then $x$ has exactly $m$ nonzero coordinates that are equal to $\pm \frac12$ and the polytope of the Delone decomposition of $\ssd_{2m}^*$ centered at $x$ contains the origin, the point $\mathbf e$, and a point from one of two classes $\mathbf t$ or $\mathbf t \oplus \mathbf e$. Moreover, these three points form a triangular face of this Delone polytope because there is an edge connecting the origin and $\mathbf e$ in the corresponding cube of the free sum.

Thus every integer vertex of $G(\ssd_{2m}^*)$ is connected with every half-integer vertex. And two half-integer vertices are connected if and only if the corresponding vectors differ in exactly one coordinate. Therefore the subgraph induced by the half-integer vertices can be viewed as one-dimensional skeleton of the cube $C_{2m}$ with opposite vertices identified. This subgraph can also be seen as the one-dimensional skeleton of $(2m-1)$-dimensional cube with additional edges connecting opposite vertices, but the first description will be more visual for our proof. 

The structure of the graph $G(\ssd_{2m}^*)$ is summarized in Figure \ref{pict:graph}.

\begin{center}
\begin{figure}
\includegraphics[width=0.4\textwidth]{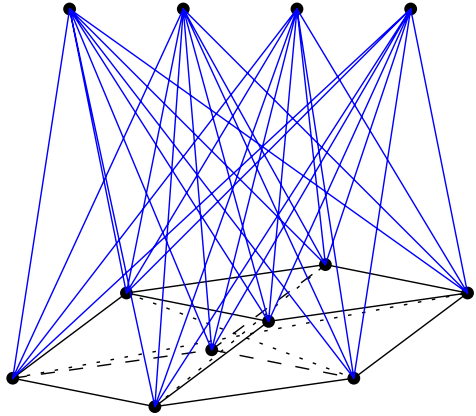}
\caption{The graph $G(\ssd_{2m}^*)$. The top level vertices are the $2m$ integer vertices. The other vertices are the $2^{2m-1}$ half-integer vertices. The blue edges connect every integer vertex with every half-integer vertex. The black edges (including dashed and dotted edges) give the structure of the 1-skeleton of the cube $C_{2m}$ with opposite vertices identified on the half-integer vertices.}
\label{pict:graph}
\end{figure}
\end{center}

\subsection{The red Venkov graph associated with a Delone subdivision of  $\ssd_{2m}^*$} \hfill \smallskip

\medskip

Suppose $\mathcal D$ is a Delone subdivision of $\ssd_{2m}^*$. That is, there is a lattice $\Lambda$ such that $\mathcal D$ is a Delone decomposition of $\Lambda$ and there exists an affine bijection from $\Lambda$ to $\ssd_{2m}^*$ that maps every polytope of $\mathcal D$ inside some free sum of two cubes within the Delone decomposition of $\ssd_{2m}^*$. In this section we describe some properties of the Venkov graph $G(\Lambda)$ associated with this Delone decomposition.

Similarly to the graph $G(\ssd_{2m}^*)$, we can split the vertices of $G(\mathcal D)$ into integer and half-integer vertices depending on whether corresponding edge connects two vertices within one cube of some free sum or two vertices from two cubes. When constructing the red Venkov graph of a subdivision, vertices and edges of the initial red Venkov graph do not disappear because edges and triangles of the initial Delone decomposition do not disappear. Hence the graph $G(\mathcal D)$ has $2^{2m-1}$ half-integer vertices.

\begin{lemma}\label{lem:i_vertex}
Let $\mathbf v$ be an integer vertex of $G(\mathcal D)$ with $k$ non-zero coordinates. Suppose $k\neq m$. Then $\mathbf v$ is connected by edges with exactly $2^{2m-k}$ half-integer vertices that can be obtained from $\frac 12\mathbf v$ by changing zero coordinates to $\pm\frac12$.
\end{lemma}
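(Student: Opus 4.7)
My plan is to prove this in two directions: that every half-integer vertex connected to $\mathbf v$ lies in the set of $2^{2m-k}$ candidates described in the statement, and that every one of those candidates is in fact connected to $\mathbf v$.

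For the first direction I would analyze the shape of a triangular dual $2$-cell witnessing an edge of $G(\mathcal D)$ incident to $\mathbf v$. Such a triangle sits inside some Delone cell of $\mathcal D$, hence inside an original free-sum cell $Q = C_1 \oplus C_2$ of the Delone decomposition of $\ssd_{2m}^*$. Since $\mathbf v$ is integer while the other two edge classes are half-integer, the triangle has two vertices in one cube of $Q$ differing by $\mathbf v$ and a third vertex in the opposite cube. Writing $T = \supp(\mathbf v)$ and letting $S$ be the active coordinate set of the cube containing the $\mathbf v$-edge, the condition that both endpoints of the $\mathbf v$-edge are vertices of this cube forces $T \subseteq S$ and pins the center of the cube on $T$ to equal $\tfrac12\mathbf v|_T$ up to a global sign. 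Since every vertex of the opposite cube agrees with this center on $S$ (in particular on $T$), the third vertex forces the half-integer class $\mathbf t$ to satisfy $\mathbf t|_T = \pm\tfrac12\mathbf v|_T$, which is exactly the condition defining the claimed set.

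For the second direction I fix $\mathbf t$ in the claimed set and build an explicit triangular $2$-face of $\mathcal D$ realizing the required edge of $G(\mathcal D)$. First I would pick an original cell $Q = C_1 \oplus C_2$ of $\ssd_{2m}^*$ whose integer cube contains $\mathbf 0$ and $\mathbf v$ and whose half-integer cube contains $\mathbf t$: the hypothesis $k < m$ provides room to enlarge $T$ to an $m$-element active set $S$ and to choose the signs of the center of $Q$ so that its $T$-part matches $\mathbf v$ and its $(S\setminus T)$-part matches $\mathbf t$ at the same time. Next I would use the assumption that $\mathbf v$ is an edge of $\mathcal D$ to argue that $\mathcal D$ subdivides this particular $Q$ along the diagonal $\mathbf 0\mathbf v$ of $C_1$. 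Finally, the free-sum/join combinatorics recalled in Section~\ref{sec:notations} lets me recognize $\{\mathbf 0,\mathbf v,\mathbf t\}$ as a $2$-face of the resulting sub-cell, of the ``(edge of $C_1$-part) $\cup$ (vertex of $C_2$-part)'' type; this is the $2$-face witnessing the required edge of $G(\mathcal D)$.

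The hard step will be the middle step of the second direction: upgrading the global hypothesis ``$\mathbf v$ is an edge of $\mathcal D$'' to the local assertion ``$\mathbf v$ is an edge of a sub-cell of $\mathcal D$ inside the specific $Q$ that I chose to contain $\mathbf t$''. Here I plan to exploit the perturbation picture of Section~\ref{sec:perturbations}: $\mathcal D$ is induced by a single global perturbation $\varepsilon G'$ of the Gram matrix, and whether $\mathbf 0\mathbf v$ appears as an edge inside a given free sum $Q$ is controlled by the action of $G'$ on the $k$-subcube spanned by $\mathbf 0$ and $\mathbf v$, a subcube that is common to every $Q$ containing the pair. This uniformity, together with the join structure of free sums, will produce the full count of $2^{2m-k}$ as $Q$ varies over the $\binom{2m-k}{m-k}\cdot 2^{m-k}$ free sums containing $\mathbf 0$ and $\mathbf v$, and also clarifies why the case $k = m$ must be excluded from the statement: then only a single $Q$ contains the pair $\mathbf 0, \mathbf v$ and the flexibility needed to sweep out the claimed set by varying $Q$ disappears.
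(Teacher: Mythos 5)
Your first direction matches the paper's argument: the triangle witnessing an edge at $\mathbf v$ lies in a free sum whose center is pinned to $\tfrac12\mathbf v$ on $\supp(\mathbf v)$, which forces $\mathbf t|_{\supp(\mathbf v)}=\pm\tfrac12\mathbf v|_{\supp(\mathbf v)}$. The problems are in the second direction. You have misplaced the difficulty: the ``global to local'' upgrade you flag as the hard step is immediate, because $\mathcal D$ is a single polytopal complex refining the free-sum decomposition, so if $[\mathbf 0,\mathbf v]$ is an edge of $\mathcal D$ (which it is, for the translate through the origin, by lattice invariance) and both endpoints lie in a face $G$ of the chosen free sum $Q$, then $[\mathbf 0,\mathbf v]$ is automatically an edge of the induced subdivision of $G$. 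No appeal to the perturbation picture is needed, and relying on Section~\ref{sec:perturbations} is actually a liability: the paper states explicitly that that discussion does not participate in any proof, and making it load-bearing would require a rigorous version of the correspondence between subdivisions and Gram-matrix perturbations that the paper only sketches.

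The step you do hand-wave is the one that genuinely needs an argument: knowing that $\mathbf 0$, $\mathbf v$, $\mathbf t$ are vertices of $Q$ and that $[\mathbf 0,\mathbf v]$ is an edge of $\mathcal D$ does not yet make the triangle $\mathbf 0\mathbf v\mathbf t$ a $2$-face of $\mathcal D$, and you cannot ``recognize'' it via free-sum/join combinatorics of the sub-cell, because a cell of an arbitrary Delone subdivision of $Q$ need not be a free sum. The paper's proof closes this by taking $F$ to be the minimal face of the integer cube containing $\mathbf 0$ and $\mathbf v$ (a proper $k$-face precisely because $k<m$), so that $\conv(F\cup\{\mathbf t\})$ is a pyramidal \emph{face} of $Q$ with apex $\mathbf t$; every full-dimensional cell of the induced subdivision of this pyramid must contain the apex, and the join of the apex with the edge $[\mathbf 0,\mathbf v]$ of the base of such a cell is then a face of $\mathcal D$. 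This also corrects your diagnosis of why $k=m$ is excluded: it is not a shortage of free sums containing the pair $\mathbf 0,\mathbf v$ (the single such $Q$ already contains all $2^{m}$ candidate vertices $\mathbf t$); it is that $F$ becomes the whole cube, $\conv(F\cup\{\mathbf t\})$ is no longer a face of the free sum, and the pyramid argument collapses --- which is exactly why D-vertices get a separate treatment in Appendix~\ref{sec:appendix}.
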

\begin{proof}
First we notice that $1\leq k\leq m$ because exactly such vertices appear in Delone polytopes of $\ssd_{2m}^*$ incident to the origin. Moreover, without loss of generality we may assume that $\mathbf v=(1^k,0^{2m-k})$, that is first $k$ coordinates of $\mathbf v$ are 1s and all other are 0s.

Let $\mathbf t$ be some half-integer vertex of $G(\mathcal D)$ connected with $\mathbf v$. There is a Delone triangle in $\mathcal D$ with vertices $z$, $z+\mathbf v$, and $z\pm\mathbf t$ for some point $z$. We may assume that $z$ is the origin and in that case these three points must be in one of the free sums from the Delone decomposition of $\ssd_{2m}^*$. The center of this free sum must have first $k$ coordinates equal to $\frac 12$. This implies that $\mathbf t$ (or $-\mathbf t$) is a half-integer vertex of the form described in the statement of the lemma.

It remains to show that all such half-integers vertices are connected with $\mathbf v$. If $k=1$, then the statement follows from Subsection \ref{sec:graph2m}. We use similar approach for other $k$.

Suppose $1<k<m$. Let $\mathbf t$ be any half-integer vertex of $G(\mathcal D)$ of the form $\mathbf t=(\frac12^k,*)$ where $*$ is placeholder for any sequence of $2m-k$ coordinates equal to $\pm\frac12$. Let $x$ be the point with first $m$ coordinates equal to the first $m$ coordinates of $\mathbf t$ and with all other coordinates being zeros. Then the free sum $P_x$ contains the origin and the points $\mathbf v$ and $\mathbf t$.

Let $F$ be the minimal face of the integer cube of $P_x$ that contains the origin and $\mathbf v$. The convex hull of $F$ and $\mathbf t$ is a face of $P_x$. In $\mathcal D$, this convex hull is also subdivided into smaller faces, and since there is an edge connecting $\mathbf v$ with the origin, there is a face $F'$ of subdivision that contains this edge and $\mathbf t$. In this case the triangle with vertices $\mathbf v$, $\mathbf t$, and the origin is a face of $F'$ and hence a Delone triangle of $\mathcal D$. This means that the vertices $\mathbf v$, $\mathbf t$, and $\mathbf v\oplus \mathbf t$, the edges of this triangle, are connected with edges in $G(\mathcal D)$.
\end{proof}

\section{Lattice $\mathsf{D}_{2m+1}^*$}\label{sec:d_odd}

\medskip


We assume that $m\geq 2$. It is worth noting that the case $m=2$ and the lattice $\ssd_5^*$ with its Delone subdivisions was consider in \cite{DGM} as part of the five-dimensional analysis. 

\subsection{The Delone decomposition of $\ssd_{2m+1}^*$} \label{sec:del2m+1} \hfill \smallskip

Every polytope of the Delone decomposition of $\ssd_{2m+1}^*$ is a join of two $m$-dimensional cubes; see \cite{CS91}. More precisely, If $\mathbf{u}$ is a point in $\rr^{2m+1}$ with $m$ integer coordinates, $m$ half-integer coordinates, and one {\it unique} coordinate in $\frac 14+\frac12\zz$, then the sphere of radius $\sqrt{\dfrac{4m+1}{16}}$ centered at $\mathbf{u}$ contains $2^{m+1}$ points of $\ssd_{2m+1}^*$ and no points of $\ssd_{2m+1}^*$ inside. These $2^{m+1}$ points can be obtained be either changing the $m$ integer coordinates of $\mathbf{u}$ by $\pm\frac12$ and setting the unique coordinate to the closest half-integer (the first cube with half-integer vertices), or by changing the $m$ half-integer coordinates of $\mathbf{u}$ by $\pm\frac12$ and setting the unique coordinate to the closest integer (the second cube with integer vertices). 

There are $(2m+1){\displaystyle {2m \choose m}=\frac 12{2m \choose m}}$ translational classes of such polytopes in the Delone decomposition of $\ssd_{2m+1}^*$ corresponding to all possible choices of the unique coordinate in $\frac 14 +\zz$ and then $m$ half-integer coordinates of $\mathbf{u}$.

In the sequel, we will denote as $P_\mathbf{u}$ the Delone polytope of the Delone decomposition of $\ssd_{2m+1}^*$ centered at an appropriate point $\mathbf{u}$. Particularly, coordinates of $\mathbf{u}$ will be $0$s, $\frac12$s and $\pm\frac14$ in most of the cases; these points represent all different classes of Delone polytopes incident to the origin.


\subsection{The red Venkov graph of $\ssd_{2m+1}^*$} \label{sec:graph2m+1} \hfill \smallskip

The structure of the red Venkov graph $G(\ssd_{2m+1}^*)$ is similar to the one described in Subsection \ref{sec:graph2m}.

The graph $G(\ssd_{2m+1}^*)$ has $2m+1+2^{2m}$ vertices. First $2m+1$ vertices correspond to edges between vertices of a single cube in $P_\mathbf u$ for all relevant $\mathbf u$; each such edge connects two points that differ by a vector $\mathbf e$ with exactly one non-zero coordinate which is $1$. We call these vertices the {\it integer} vertices of $G(\ssd_{2m+1}^*)$ and denote each vertex as $\mathbf e$ for appropriate $\mathbf e$. 

The remaining $2^{2m}$ vertices correspond to edges between two different cubes in the joins of cubes. These edges connect vertices that differ by vectors $\mathbf t$ with all entries equal to $\pm \frac 12$; opposite vectors $\mathbf t$ and $-\mathbf t$ define translationally equivalent edges and therefore the same vertex of $G(\ssd_{2m+1}^*)$. We say that these vertices are {\it half-integer} vertices of $G(\ssd_{2m+!}^*)$ and denote them $\mathbf t$ for appropriate $\mathbf t$. We can associate the half-integer vertices of $G(\ssd_{2m+1}^*)$ with the vertices of $2^{2m+1}$-dimensional cube $C_{2m+1}:=\left[-\frac12,\frac12 \right]^{2m+1}$ after identification of pairs of opposite vertices.

The edges of the graph $G(\ssd_{2m+1}^*)$ come from triangular faces of the Delone decomposition. Two vertices of $G(\ssd_{2m+1}^*)$ are connected with an edge if and only if, there is a traingular face of the Delone decomposition of $\ssd_{2m+1}^*$ whose two edges correspond to these vertices.

Each triangular face of a join of two cubes has two vertices in one cube and one vertex in the other cube. Hence no edge of $G(\ssd_{2m+1}^*)$ connects two integer vertices. Additionally, two half-integer vertices of $G(\ssd_{2m+1}^*)$ (the vertices corresponding to edges between two different cubes) may be connected with an edge only if the corresponding vectors differ in exactly one coordinate.

On the other hand, for every integer vertex ${\mathbf e}$ and every half-integer vertex ${\mathbf t}$, a representative of the class $\mathbf t \oplus \mathbf e$ corresponds to a half-integer vertex of $G(\ssd_{2m+1}^*)$. These three vertices of the graph correspond to edges of one triangle of the Delone decomposition of $\ssd_{2m+1}^*$ in the same way we established for the graph $G(\ssd_{2m}^*)$.


Thus, every integer vertex of $G(\ssd_{2m+1}^*)$ is connected with every half-integer vertex. And two half-integer vertices are connected if and only if the corresponding vectors differ in exactly one coordinate. Therefore the subgraph induced by the half-integer vertices can be viewed as one-dimensional skeleton of the cube $C_{2m+1}$ with opposite vertices identified. 

%


\subsection{The red Venkov graph associated with a Delone subdivision of  $\ssd_{2m+1}^*$} \hfill \smallskip

\medskip

Similarly to Delone subdivisions of $\ssd_{2m}^*$, if $\mathcal D$ is a Delone subdivision of $\ssd_{2m+1}^*$, then we can split the vertices of $G(\mathcal D)$ into integer and half-integer vertices depending on whether corresponding edge connects two vertices within one cube of some join or two vertices from two cubes. When constructing the red Venkov graph of a subdivision, vertices and edges of the initial red Venkov graph do not disappear because edges and triangles of the initial Delone decomposition do not disappear. Hence the graph $G(\mathcal D)$ has $2^{2m-1}$ half-integer vertices.

We also can formulate the following lemma; it is an analogue of Lemma \ref{lem:i_vertex}. The proof is also similar.


\begin{lemma}\label{lem:i_vertex_odd}
Let $\mathbf v$ be an integer vertex of $G(\mathcal D)$ with $k$ non-zero coordinates. We may assume $k\leq m$. Then $\mathbf v$ is connected by edges with exactly $2^{2m+1-k}$ half-integer vertices that can be obtained from $\frac 12\mathbf v$ by changing zero coordinates to $\pm\frac12$.
\end{lemma}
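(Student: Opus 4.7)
My plan is to mirror the proof of Lemma \ref{lem:i_vertex}, exploiting the fact that the Delone polytopes of $\ssd_{2m+1}^*$ are \emph{joins} (rather than free sums) of cubes. This structural difference will allow the same argument to apply for every $1\le k\le m$, including $k=m$, the case that Lemma \ref{lem:i_vertex} had to exclude.

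Without loss of generality I assume $\mathbf v=(1^k,0^{2m+1-k})$; the bound $k\le m$ comes from the fact that integer vertices of $G(\mathcal D)$ arise only from edges contained inside one of the two $m$-dimensional cubes of some join $P_\mathbf u$. For the necessity direction, suppose $\mathbf t$ is a half-integer vertex connected to $\mathbf v$ by a Delone triangle of $\mathcal D$ on $\{\mathbf 0,\mathbf v,\pm\mathbf t\}$; this triangle lies inside some join $P_\mathbf u$ from the Delone decomposition of $\ssd_{2m+1}^*$. Since $\mathbf 0$ and $\mathbf v$ both lie in the integer cube of $P_\mathbf u$, the description in Subsection \ref{sec:del2m+1} forces the first $k$ coordinates of $\mathbf u$ to equal $\tfrac12$, and then $\mathbf t$, as the displacement to a vertex of the complementary half-integer cube, must have its first $k$ coordinates equal to $\tfrac12$ and its remaining $2m+1-k$ coordinates each equal to $\pm\tfrac12$. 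This gives at most $2^{2m+1-k}$ candidates.

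For sufficiency, given any such $\mathbf t$, I build an explicit center $x$: put $\tfrac12$ in the first $k$ coordinates of $x$, choose $m-k$ further positions where $x$ agrees with $\mathbf t$ (half-integer positions of $x$), pick one further position for a unique coordinate $\pm\tfrac14$ whose sign matches $\mathbf t$, and put $0$ in the remaining $m$ positions (integer positions of $x$). Then $\mathbf 0$ and $\mathbf v$ are vertices of the integer cube of $P_x$ and $\mathbf t$ is a vertex of its half-integer cube. Let $F$ be the minimal face of the integer cube containing $\mathbf 0$ and $\mathbf v$. By the join face structure, $\conv(F\cup\{\mathbf t\})=F*\{\mathbf t\}$ is a face of $P_x$ even if $F$ happens to be the full integer cube, and this is the critical point where the odd case improves on Lemma \ref{lem:i_vertex}. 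Because $P_x$ has no interior lattice points of $\ssd_{2m+1}^*$, the only lattice points inside this pyramid are the $2^k$ vertices of $F$ together with $\mathbf t$; consequently every top-dimensional Delone polytope of $\mathcal D$ contained in $F*\{\mathbf t\}$ is a pyramid with apex $\mathbf t$ over some top-dimensional cell of the induced subdivision of $F$. Since $(\mathbf 0,\mathbf v)$ is already an edge of $\mathcal D$, at least one such cell $C\subset F$ contains it as a $1$-face, and then the pyramid $\conv(C\cup\{\mathbf t\})$ has the triangle on $\{\mathbf 0,\mathbf v,\mathbf t\}$ as a $2$-face, yielding the required edge of $G(\mathcal D)$. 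The main obstacle is precisely this last step, passing from ``$(\mathbf 0,\mathbf v)$ is an edge of $\mathcal D$'' to the triangle being a Delone triangle, and it is overcome by combining the no-interior-lattice-points property of $P_x$ with the join-face structure that keeps $F*\{\mathbf t\}$ a genuine face of $P_x$ for every $k\le m$.
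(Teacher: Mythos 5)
Your proof is correct and follows exactly the route the paper intends: the paper gives no separate argument for Lemma \ref{lem:i_vertex_odd}, stating only that it is the analogue of Lemma \ref{lem:i_vertex} with a similar proof, and your write-up is a faithful (and more detailed) instantiation of that analogy, with the center $x$ adjusted for the unique $\pm\frac14$ coordinate. Your identification of why $k=m$ is no longer special --- that $\conv(F\cup\{\mathbf t\})$ remains a face of the join even when $F$ is the whole cube, unlike for free sums --- is precisely the point the author makes later in the proof of Lemma \ref{lem:d_odd}.
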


\section{Proof of the main results}\label{sec:main}

\medskip


In this section we prove the main theorem of this paper, Theorem \ref{thm:main}. We split it in several lemmas.

\begin{lemma}\label{lem:e}
Theorem \ref{thm:main} holds for lattices $\sse_6^*$, $\sse_7^*$, and $\sse_8^*$. 
\end{lemma}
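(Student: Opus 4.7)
The plan is to reduce each of the three cases to a previously established instance of the Voronoi conjecture: Theorem \ref{thm:zhitomirski} for $\sse_6^*$ and Theorem \ref{thm:ordine} for $\sse_7^*$ and $\sse_8^*$. Since strong combinatorial equivalence of the dual cell complex of $P$ with a Delone subdivision identifies dual $k$-cells of $P$ with $k$-dimensional faces of polytopes in that subdivision, the main task is to translate the face-shape restrictions proved in Section \ref{sec:e} into restrictions on the dual cells of $P$.

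For $\sse_6^*$, I would first observe that under the hypothesis, every dual 2-cell of $P$ is combinatorially equivalent to a 2-dimensional face of some polytope in the Delone subdivision of $\sse_6^*$. By Lemma \ref{lem:e6}, every such 2-face is a triangle, hence every dual 2-cell of $P$ is a triangle. Applying Theorem \ref{thm:zhitomirski} then immediately gives the Voronoi conjecture for $P$.

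For $\sse_7^*$ and $\sse_8^*$, the same mechanism operates one dimension higher. By Delone's classification, each dual 3-cell of a parallelohedron is a tetrahedron, octahedron, pyramid over a parallelogram, triangular prism, or parallelepiped. Lemmas \ref{lem:e7} and \ref{lem:e8} rule out the last two types for any 3-face of a polytope in a Delone subdivision of $\sse_7^*$ or $\sse_8^*$. Strong combinatorial equivalence then ensures that every dual 3-cell of $P$ is a tetrahedron, octahedron, or pyramid, and Theorem \ref{thm:ordine} concludes the argument.

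The only point requiring care is the passage from combinatorial properties of faces of Delone polytopes (as established in Section \ref{sec:e}) to the dual cells of $P$; but this is not a genuine obstacle since strong combinatorial equivalence of the whole dual cell complex restricts to a combinatorial equivalence on each individual cell and its face lattice, which is all that Theorems \ref{thm:zhitomirski} and \ref{thm:ordine} require as input.
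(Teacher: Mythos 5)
Your proposal is correct and follows exactly the paper's own argument: Lemma \ref{lem:e6} plus Zhitomirski's theorem for $\sse_6^*$, and Lemmas \ref{lem:e7} and \ref{lem:e8} plus Ordine's theorem for $\sse_7^*$ and $\sse_8^*$. Your additional remark on why strong combinatorial equivalence lets one transfer the face-shape restrictions from Delone polytopes to dual cells is a point the paper leaves implicit, and it is handled correctly.
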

\begin{proof}
Lemma \ref{lem:e6} ensures that every two-dimensional face of every Delone subdivision of $\sse_6^*$ is a triangle. Therefore the corresponding paralleloheda satisfy the Voronoi conjecture due to result of Zhitomirski \cite{Zhi}.

Lemmas \ref{lem:e7} and \ref{lem:e8} ensure that no three-dimensional face of any Delone subdivision of $\sse_7^*$ and $\sse_8^*$ is a triangular prism or a parallelepiped. Thus, the corresponding parallelohedra satisfy the Voronoi conjecture due to result of Ordine \cite{Ord}.
\end{proof}

\begin{lemma}\label{lem:d_even}
Let $m\geq 3$ be an integer. Theorem \ref{thm:main} holds for the lattice $\ssd_{2m}^*$. 
\end{lemma}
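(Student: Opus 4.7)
The plan is to verify the hypothesis of Theorem \ref{thm:basic} for $P$: I show the rational $1$-cycle space of the red Venkov graph $G(\mathcal D)$ is generated by basic cycles, where $\mathcal D$ is the Delone subdivision of $\ssd_{2m}^*$ strongly combinatorially equivalent to the dual cell complex of $P$. The structural foundation, already recorded in Subsection \ref{sec:graph2m}, is that the half-integer subgraph of $G(\mathcal D)$ coincides with that of $G(\ssd_{2m}^*)$, namely the folded $2m$-cube on $2^{2m-1}$ vertices: the triangular $2$-cells responsible for these edges are cells of the original Delone decomposition of $\ssd_{2m}^*$ and persist in any refinement.

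I first identify two always-available families of basic cycles. Every square $4$-cycle $\{\mathbf t,\mathbf t\oplus\mathbf e_i,\mathbf t\oplus\mathbf e_i\oplus\mathbf e_j,\mathbf t\oplus\mathbf e_j\}$ in the folded cube arises as the apex trivially contractible cycle of a square pyramid of the original decomposition; since $\mathcal D$ cannot introduce new lattice points, each such pyramid either stays intact or splits into two tetrahedra along one of the two base diagonals, so the $4$-cycle is either a basic cycle directly or equals the sum of two basic trivially contractible $3$-cycles of the resulting tetrahedra. Every $3$-cycle $\{\mathbf e_i,\mathbf t,\mathbf t\oplus\mathbf e_j\}$ is also basic: a half-belt cycle when $i=j$ (from a triangular $2$-cell of a free sum), and the apex trivially contractible cycle of a tetrahedron of the original decomposition when $i\neq j$ (tetrahedra cannot be further subdivided since they have only four vertices). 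A transfer argument for the $2$-to-$1$ cover of the ordinary cube over the folded cube shows the rational cycle space of the folded $2m$-cube is generated by its squares, so every cycle contained entirely in the half-integer subgraph is a rational combination of basic cycles.

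It remains to reduce any cycle in $G(\mathcal D)$ to this half-integer subgraph modulo basic cycles, by eliminating passages through integer vertices. For an original integer vertex $\mathbf v=\mathbf e_i$ (the $k=1$ case of Lemma \ref{lem:i_vertex}), every half-integer vertex is a neighbor and every star triangle $\{\mathbf e_i,\mathbf t_1,\mathbf t_2\}$ (for adjacent $\mathbf t_1,\mathbf t_2$) is basic; any subpath $\mathbf t_1\to\mathbf e_i\to\mathbf t_2$ can be rerouted through a folded-cube path from $\mathbf t_1$ to $\mathbf t_2$ at the cost of basic triangles only. For a subdivision-introduced integer vertex $\mathbf v$ with $k\geq 2$ nonzero coordinates, Lemma \ref{lem:i_vertex} shows the half-integer neighbors form a $(2m-k)$-dimensional sub-cube $N_{\mathbf v}$; each star triangle $\{\mathbf v,\mathbf t_1,\mathbf t_2\}$ (for adjacent $\mathbf t_1,\mathbf t_2\in N_{\mathbf v}$) must then be expressed as a rational combination of the half-belt $3$-cycles $\{\mathbf v,\mathbf t,\mathbf v\oplus\mathbf t\}$ coming from triangular $2$-cells of $\mathcal D$ that carry the diagonal direction $\mathbf v$, together with the basic square $4$-cycles within $N_{\mathbf v}$.

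This last reduction is the main obstacle: verifying, for every subdivision-introduced integer vertex, that its star triangles lie in the span of basic cycles. It requires working out explicitly how the triangular $2$-cells of $\mathcal D$ containing a diagonal direction $\mathbf v$ are organized, and how their half-belt cycles combine with the pyramidal $4$-cycles in the sub-cube neighborhood $N_{\mathbf v}$ to produce each star triangle. Once this is done, any cycle of $G(\mathcal D)$ is rationally homologous, modulo basic cycles, to a cycle inside the half-integer subgraph, which is already in the span of basic cycles, and Theorem \ref{thm:basic} then yields the Voronoi conjecture for $P$.
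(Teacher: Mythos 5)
Your overall strategy matches the paper's: reduce every cycle of $G(\mathcal D)$ modulo basic cycles to a cycle in the half-integer subgraph (the folded cube $C_{2m}$), then generate that subgraph's rational cycle space by square $4$-cycles coming from pyramidal dual $3$-cells (or pairs of tetrahedra when a pyramid is subdivided). Your treatment of the unsubdivided decomposition and of the original integer vertices ($k=1$) is essentially the paper's argument, and the observation that a quadrangular pyramid can only split along a base diagonal into two tetrahedra is exactly how the paper handles the concluding step.

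However, the proposal has a genuine gap, and it is precisely the step you flag as ``the main obstacle'': you never show that the star triangles $\{\mathbf v,\mathbf t_1,\mathbf t_2\}$ at a subdivision-introduced integer vertex lie in the span of basic cycles. The paper closes this by a direct geometric argument: for $1<k<m$ it exhibits a free sum $P_y$ in which $\mathbf v$ is a face diagonal of one cube and $\mathbf t_1\oplus\mathbf t_2$ is an edge of the other cube, so that $\conv(F\cup\{\mathbf t_1\}\cup\{\mathbf t_2\})$ is a face of $P_y$ whose induced subdivision contains the tetrahedron $\mathbf{0}\mathbf v\mathbf t_1\mathbf t_2$; the star triangle is then itself a trivially contractible cycle, not merely a combination of half-belts and squares. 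Beyond this, your reduction silently assumes every edge of the cycle is either an IH-edge at an integer vertex or an S-edge of the folded cube, but a subdivision can create several further configurations that the paper must (and does) eliminate first: D-vertices, i.e.\ integer vertices with exactly $k=m$ nonzero coordinates, to which Lemma \ref{lem:i_vertex} does not apply (its hypothesis is $k\neq m$) and which the paper removes using the fact that the corresponding edge lies only in triangular $2$-cells; II-edges joining two integer vertices, which are traded for two IH-edges via a tetrahedron $\mathbf{0}\mathbf a\mathbf b\mathbf c$ after ruling out octahedra and pyramids; and D-edges and H-edges between half-integer vertices differing in more than one coordinate, which must be converted to S- or IH-edges before the folded-cube argument can be invoked. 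Without these case analyses the reduction to the half-integer skeleton of $C_{2m}$ does not go through, so the proof is incomplete as written.
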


\begin{proof}
We first proof the statement for the case when the dual cell complex of $P$ is equivalent to the Delone decomposition of $\ssd_{2m}^*$. After that we give a sketch of the proof for the Delone subdivisions of $\ssd_{2m}^*$ and the complete proof of this case with all the details is given in Appendix \ref{sec:appendix}.

We need to show that the set of basic cycles (half-belt and trivially contractible cycles) generate the group of cycles of $G(\ssd_{2m}^*)$. Let $\mathcal G$ be the group of cycles of $G(\ssd_{2m}^*)$, and let $\mathcal C$ be the subgroup generated by the basic cycles. We will show that for every cycle $x$ of $\mathcal G$, the coset $x+\mathcal C$ contains a combination of trivially contractible cycles and therefore $x$ is an element of $\mathcal C$.

Suppose $x$ passes through an integer vertex $\mathbf e$ of $G(\ssd_{2m}^*)$. According to the results of Subsection \ref{sec:graph2m} there are two half-integer vertices $\mathbf a$ and $\mathbf b$ such that $x=\ldots \mathbf{aeb}\ldots$. The vertices $\mathbf a$ and $\mathbf b$ can be connected by a path of edges through half-integer vertices only. We will show that changing two edges $\mathbf{aeb}$ to this path does not change the coset of $x$.

Since $\mathbf e$ is connected by edges with every half-integer vertex, it is enough to show this property if there is an edge between $\mathbf a$ and $\mathbf b$, that is if $\mathbf a \oplus \mathbf b$ corresponds to an integer vertex of $G(\ssd_{2m}^*)$. If $\mathbf a \oplus \mathbf b=\mathbf e$, then there is a triangular face in the Delone decomposition of $\ssd_{2m}^*$ with edges corresponding to $\mathbf a$, $\mathbf b$, and $\mathbf e=\mathbf a\oplus \mathbf b$ (see Subsection \ref{sec:graph2m}). Thus the cycle $\mathbf{aeba}$ is a half-belt cycle and swapping two edges $\mathbf{aeb}$ to $\mathbf{ab}$ does not change the coset.

Now suppose $\mathbf f :=\mathbf a \oplus \mathbf b\neq \mathbf e$; then $\mathbf f$ is also an integer vector with one non-zero coordinate. In that case we can find a free sum of two cubes such that $\mathbf e$ represents a side of one cube, $\mathbf f$ represents a side of the other cube, and $\mathbf a$ and $\mathbf b$ represent two vectors between the cubes. 

Without loss of generality we may assume that $\mathbf e$ has one coordinate 1 and that $\mathbf a$ and $\mathbf b$ have  coordinates $\frac 12$ in that position. Let $y$ be any point that satisfies the following properties
\begin{itemize}
\item $y$ has coordinate $\frac12$ in the place where $\mathbf e$ has nonzero coordinate,
\item $y$ matches with both $\mathbf a$ and $\mathbf b$ in $m-1$ common coordinates, and
\item the remaining $m$ coordinates of $y$ are zeros.
\end{itemize}
Then $P_y$ is the desired free sum, see the left part of Figure \ref{pict:remove_integer}.

\begin{center}
\begin{figure}
\includegraphics[width=\textwidth]{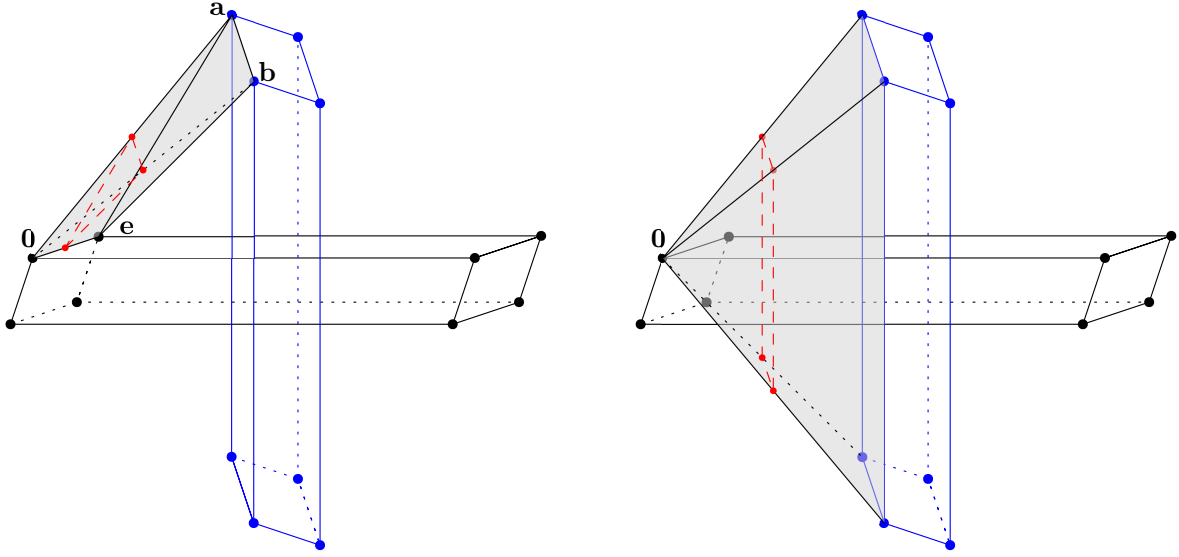}
\caption{The left part of the figure shows the tetrahedral face $\mathbf{0aeb}$ of the free sum of two cubes. The red cycle shows the trivially contractible cycle $\mathbf{aeba}$ used in removing integer vertices from the cycle $x$. The right shows a pyramidal face of the free sum. The red cycle is the trivially contractible cycle within this pyramid used to generate the boundary of a face of $C_{2m}$. The black points correspond to integer vertices of the free sum and the blue points correspond to half-integer vertices of the free sum.}
\label{pict:remove_integer}
\end{figure}
\end{center}

The four vertices of $P_y$ representing classes of the origin, $\mathbf e$, $\mathbf a$, and $\mathbf b$ form a tetrahedral face of $P_y$ and the cycle $\mathbf{aeba}$ is a trivially contractible cycle in this tetrahedron. Swapping two edges $\mathbf{aeb}$ to $\mathbf{ab}$ does not change the coset of $x$.

Repeating these steps we find a cycle $x'\in x+\mathcal C$ that contains only edges between half-integer vertices of $G(\ssd_{2m}^*)$, i.e. $x'$ can be represented as a path in the 1-skeleton of the cube $C_{2m}$ that is either a cycle or starts and ends in two opposite vertices of this cube.

We claim that the boundary of every two-dimensional face of $C_{2m}$ is trivially contractible cycle. Indeed, suppose the free sum of two cubes $P\oplus Q$ is incident to the origin and origin is a vertex of $P$. Then the origin together with one square face of $Q$ form a pyramidal face of the free sum. The trivially contractible cycle of this pyramid is exactly the boundary of a two-dimensional face of $C_{2m}$, see the right part of Figure \ref{pict:remove_integer}. It is easy to see that we can get boundary of every face of $C_{2m}$ by choosing an appropriate free sum.

The boundaries of all two-dimensional faces of $C_{2m}$ together with a path between two opposite vertices generate all cycles in $\mathcal G$ if we use integer coefficients. However a multiple of this path is generated by boundaries as well so all cycles of $\mathcal G$ are generated by the boundaries with rational coefficients. Thus $x'$ belong to the trivial coset, so is the initial cycle $x$.


Now let $\mathcal D$ be some Delone subdivision of $\ssd_{2m}^*$. For $\mathcal D$ we use a similar approach reducing every cycle of the red Venkov graph to cycle on $1$-skeleton of the cube $C_{2m}$. However, since the red Venkov graph $G(\mathcal D)$ may have additional vertices we first apply another reduction approach to deal with additional vertices and edges.

Recall that we split all vertices of $G(\mathcal D)$ in integer and half-integer vertices. We identify the following additional classes of edges and vertices.
\begin{itemize}
\item D-vertices (or diagonal vertices). These are the integer vertices of $G(\mathcal D)$ that correspond to some diagonal of a cube in some free sum. They correspond to vectors of the form $(\pm1^m,0^m)$ where exactly $m$ coordinates are zeros.
\item I-vertices (or non-diagonal integer vertices). These are all other integer vertices.
\item IH-edges (or integer-to-half-integer edges). These are the edges between one integer vertex and one half-integer vertex.
\item II-edges. These are the edges of  $G(\mathcal D)$ that connect two integer vertices.
\item D-edges (or diagonal edges between half-integer vertices). These edges are between two half-integer vertices that differ in exactly $m$ coordinates.
\item S-edges (or edges of the 1-skeleton). These are the edges between two half-integer vertices that belong to 1-skeleton of the cube $C_{2m}$.
\item H-edges (or non-diagonal and non-skeletal edges between half-integer vertices). These are all other edges between half-integer vertices.
\end{itemize}

In these terms, the idea of the proof for the lattice $\ssd_{2m}^*$ above can be reformulated as follows. If our cycle $x$ contains a pair of IH-edges with common I-vertex, then we substitute it with a sequence of S-edges and get another cycle from the coset $x+\mathcal S$. Similar steps for subdivisions of $\ssd_{2m}^*$ are the following.

Suppose $x$ is a cycle in the red Venkov graph $G(\mathcal D)$ of some Delone subdivision $\mathcal D$ of $\ssd_{2m}^*$. We will use the following modifications of $x$ without changing the corresponding coset $x+\mathcal S$ where $\mathcal S$ the subgroup generated by the basic cycles. For each step we will use the same notation $x$ for the old/new cycle.

\begin{enumerate}
\item If $x$ passes through a D-vertex, then we can change a pair of consecutive edges through such a vertex with a sequence of edges that are not incident to any D-vertex.
\item If $x$ contains II-edges, then we can change every II-edge with a pair of IH-edges without adding new integer vertices. Note that the resulting cycle will not contain D-vertices after such changes.
\item If $x$ contains D-edges, then we can swap each D-edge into two H- or S- edges, or two IH-edges. Note that after this step all edges of $x$ will be IH-, S- or H-edges, and there will be only I-vertices among integer vertices of $x$.
\item If $x$ contain H-edges, then every such edge can be swapped by a pair of IH-edges incident to only I-vertices among integer vertices. Note that at this point $x$ will contain only I-vertices and half-integer vertices and all edges of $x$ will be S-edges or IH-edges. This resembles the situation for the lattice $\ssd_{2m}^*$ with only exception that we may have more integer vertices but not D-vertices.
\item If $x$ contains I-vertices, then we can swap a pair of edges incident to one such vertex with a sequence of S-edges similarly to the approach for $\ssd_{2m}^*$ above.
\end{enumerate}

After performing these steps we get a cycle $x'\in x+\mathcal S$ that consists of S-edges only. Using a small modification of the proof for $\ssd_{2m}^*$ we can show that $x'$ is also generated by basic cycles. The details of the proof and the separate steps are given in Appendix~\ref{sec:appendix}.
\end{proof}

\begin{lemma}\label{lem:d_odd}
Let $m\geq 2$ be an integer. Theorem \ref{thm:main} holds for the lattice $\ssd_{2m+1}^*$. 
\end{lemma}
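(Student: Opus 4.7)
The proof runs in close parallel with that of Lemma \ref{lem:d_even}, exploiting the structural analogy between the red Venkov graphs of $\ssd_{2m}^*$ and $\ssd_{2m+1}^*$: in both, the half-integer vertices form the 1-skeleton of a cube ($C_{2m+1}$ now) with antipodal vertices identified, every integer vertex is joined by an edge to every half-integer vertex, and Lemma \ref{lem:i_vertex_odd} governs the integer vertices in any Delone subdivision $\mathcal D$. The plan is to show that any rational cycle $x$ in $G(\mathcal D)$ can be reduced, modulo the subgroup $\mathcal C$ generated by the basic cycles, to a cycle supported only on S-edges between half-integer vertices, and then that any such S-cycle is already in $\mathcal C$.

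First I would dispatch the unsubdivided case $\mathcal D =$ Delone decomposition of $\ssd_{2m+1}^*$. If a cycle $x$ passes through an integer vertex $\mathbf e$ as $\ldots\mathbf{aeb}\ldots$, then $\mathbf a\oplus\mathbf b$ is either $\mathbf e$ (giving a half-belt triangle $\mathbf{aeba}$) or a different integer vector $\mathbf f$; in the latter case I would locate a Delone join $C_1 * C_2$ in which $\mathbf e$ labels an edge of one cube, $\mathbf f$ an edge of the other, and $\mathbf a, \mathbf b$ the two cross-edges, so that $\{0,\mathbf e,\mathbf a,\mathbf b\}$ spans a tetrahedral face whose trivially contractible cycle is $\mathbf{aeba}$. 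In either situation the subpath $\mathbf{aeb}$ may be replaced by $\mathbf{ab}$ without changing the coset of $x$, and iterating removes every integer vertex, leaving a cycle on the half-integer 1-skeleton.

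I then need to show that the boundary of every 2-face of $C_{2m+1}$ is a trivially contractible cycle. Choosing a Delone polytope $P_\mathbf u = C_1 * C_2$ in which the origin is a vertex of $C_1$, the convex hull $\{0\} * F$ of the origin with any square face $F$ of $C_2$ is a pyramidal face of $P_\mathbf u$ with apex at the origin, and its trivially contractible cycle is exactly the boundary of a 2-face of $C_{2m+1}$ in the S-edge subgraph. Since these 2-face boundaries, together with paths between antipodally identified vertices (whose torsion contribution is killed by the passage to $\qqq$ coefficients), span all cycles of the identified 1-skeleton, the unsubdivided case follows.

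For a general Delone subdivision $\mathcal D$ of $\ssd_{2m+1}^*$, I would port the five-step reduction used for $\ssd_{2m}^*$: classify integer vertices into D-vertices (the diagonals of form $(\pm 1^m, 0^{m+1})$ of an $m$-cube inside some join) and I-vertices, classify edges as IH, II, D, S, or H, and successively eliminate D-vertices, II-edges, D-edges, H-edges, and finally I-vertices, each step preserving $x + \mathcal C$ by the same local arguments. The main obstacle — and the only genuine change from the $\ssd_{2m}^*$ argument — is that a join has richer face structure than a free sum, since the two cubes themselves are now facets of $P_\mathbf u$. However, the faces that actually drive the reductions (triangles with a $2/1$ split between the cubes, tetrahedra $e_1 * e_2$ with one edge in each cube, and pyramids $\{v\} * F$ over a square face) behave identically under join and free sum, so the local moves in Appendix \ref{sec:appendix} transfer verbatim under the substitutions $\oplus \mapsto *$ and $2m \mapsto 2m+1$, using Lemma \ref{lem:i_vertex_odd} in place of Lemma \ref{lem:i_vertex} at each occurrence.
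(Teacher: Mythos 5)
Your overall strategy is the paper's: reduce any cycle, modulo the basic cycles, to one supported on S-edges of the half-integer skeleton of $C_{2m+1}$, and then generate those by the boundaries of 2-faces coming from pyramidal faces $\{0\}*F$ of the joins. The unsubdivided case and the pyramid/tetrahedron arguments do carry over as you describe. The one step that fails as written is your claim that the D-vertex and D-edge eliminations of Appendix \ref{sec:appendix} ``transfer verbatim under $\oplus\mapsto *$''. Those two steps are not join-agnostic: the termination argument for D-vertex removal (two edges corresponding to D-vertices in one free sum would intersect in the center of the free sum, which is impossible) and the exclusion of the pyramidal case in the D-edge step (the center of the base would be the center of the free sum, the only common point of the two cubes) both rest on the fact that in $P\oplus Q$ the two cubes meet at their shared center, so their main diagonals pass through a common interior point. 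In a join $P*Q$ the cubes are skew and disjoint, so these justifications evaporate; in particular a single join can contain an edge of $\mathcal D$ along the main diagonal of each of its two cubes, and your ported argument no longer guarantees that eliminating one D-vertex does not reintroduce another.

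The repair is the observation the paper actually makes, and it simplifies rather than complicates the argument: since $P$ and $Q$ are themselves faces of $P*Q$ (whereas they are not faces of $P\oplus Q$), the minimal face of a cube containing its main diagonal is still a face of the join, so Lemma \ref{lem:i_vertex_odd} has no exceptional case $k=m$, unlike Lemma \ref{lem:i_vertex}. Consequently the $(\pm1^m,0^{m+1})$ vertices are ordinary I-vertices and the diagonal edges between half-integer vertices are ordinary H-edges; the D-vertex and D-edge steps should simply be dropped, not ported. With that correction your remaining reduction chain (II-edges, H-edges, I-vertices, then S-cycles) goes through exactly as in the proof of Lemma \ref{lem:d_even}.
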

\begin{proof}
The proof is similar to the proof of Lemma \ref{lem:d_even}. We highlight the main steps and only emphasize the differences in two proofs.


We use the structure of the graph $G(\ssd_{2m+1}^*)$ described in Section \ref{sec:d_odd} and show that every pair of edges incident to one integer vertex can be changed into a sequence of edges between half-integer vertices. After that we can show that every cycle on half-integer vertices is generated by the basic cycles of $G(\ssd_{2m+1}^*)$.

The situation with Delone subdivisions is even slightly simpler for $\ssd_{2m+1}^*$ than for $\ssd_{2m}^*$. There is no need to consider D-vertices or D-edges separately as they can be treated as general I-vertices or H-edges respectively. This is because the case $k=m$ is not special for Lemma \ref{lem:i_vertex_odd} while it is special for Lemma \ref{lem:i_vertex}. Another point of view is the following, the cubes of the joins in the Delone decomposition of $\ssd_{2m+1}^*$ are faces of these joins (and hence their diagonals define faces), while this is not true in free sums of the Delone decomposition of $\ssd_{2m}^*$.

Thus the proof follows the same framework as described in Appendix \ref{sec:appendix} but with some steps being redundant. The justification for separate steps is similar for $\ssd_{2m+1}^*$.
\end{proof}

Combining Lemmas \ref{lem:e}, \ref{lem:d_even}, and \ref{lem:d_odd} we get a proof for Theorem \ref{thm:main} as well as for its reformulation  as Theorem \ref{thm:main2}.

\section{Concluding remarks}\label{sec:conclusion}

\medskip

In this paper we study the dual root lattices  and combinatorics of the Delone decompositions of their perturbations. Similar question whether all perturbations of a given lattice $\Lambda$ carry enough combinatorics to ensure the Voronoi conjecture can be asked for any $\Lambda$. In this section we briefly discuss this question for some other families of lattices.

\subsection{Root lattices} One of the most natural examples to consider probably even before dual root lattices is the root lattices themselves. However, this case appears to be more involved than the dual root lattices despite sharing the same symmetries.

We will use the lattice $\ssd_d$ and its dual to illustrate the case. For the dual lattice, the Delone polytopes are either free sums or joins of two cubes of dimension $\lfloor \frac{d}{2}\rfloor$. On the other hand, for $\ssd_d$, the Delone polytopes are either cross-polytopes or half-cubes \cite{CS91}. In the latter case the polytopes have $2^{d-1}$ vertices or about the quadratic number of the number of vertices for Delone polytopes of $\ssd_d^*$.

Thus, we expect that there  are considerably more ways to subdivide $d$-dimensional half-cubes than $d$-dimensional free sums of joins. While it does not mean that combinatorics of the perturbed lattice will not be enough for the Voronoi conjecture, the associated Venkov graph could change a lot compared to the graph $G(\ssd_d)$.

Nevertheless we believe that the analogue of Theorem \ref{thm:main} holds for all root lattices too; however, we do not have a justification at the moment.

\subsection{Rigid lattices}


Another family of lattices that worth considering for similar question is the class of rigid lattices. A lattice $\Lambda$ is called {\it rigid} if every perturbation of $\Lambda$ other than scaling has an affinely different Delone decomposition, see \cite{DG}. Among the root lattices and their dual in dimensions at least 2, the lattices $\ssd_d$, $\ssd_{2m}^*$, $\sse_6$, $\sse_6^*$, $\sse_7$, $\sse_7^*$, and $\sse_8=\sse_8^*$ are rigid.

Every $d$-dimensional lattice can be represented as a combination of rigid lattice but possibly of smaller dimensions, see \cite{DGSW} and references therein for an approach to enumeration of Voronoi parallelohedra that uses rigid lattices (or extreme rays). For example, there are seven five-dimensional rigid lattices, see \cite{BG} and \cite{DGSW}.

For 3 out of 7 of the five-dimensional rigid lattices, the approach that we used for lattices $\sse_6^*$, $\sse_7^*$, and $\sse_8$ can be used. For these three rigid lattices, every Delone polytope is either 2-neighborly or ``almost'' 2-neighborly meaning that every vertex is connected by edges with all other vertices but possibly one.

This observation does not prove any new result because the analogue of Theorem \ref{thm:main} was proved in \cite{DGM} for all five-dimensional lattices. However, this observation could make the computations in \cite{DGM} considerably faster as it shows that for some sizeable amount of lattices, the main result of \cite{DGM} can be established without computations of the associated Venkov graphs (or Venkov complexes).

Speaking about higher dimensions, a complete list of six-dimensional rigid lattices is not known. However, there is list of more than 25,000 rigid six-dimensional lattices \cite{DV} that appear ``close'' to the lattice $\sse_6^*$. These six-dimensional lattices can serve as first candidates to check a similar approach in $\rr^6$.

\subsection{Experimental avenues} Particularly, the lattices mentioned above open several experimental avenues to approach the Voronoi conjecture and a theoretical counterexample. If there is a way to construct a lattice (or the corresponding Voronoi polytope) that does not satisfy Theorem \ref{thm:basic} or its strengthening in \cite[Thm 5.1]{DGM}, then it could possibly mean that combinatorics of parallelohedra is not enough to enforce the Voronoi conjecture and further geometric arguments are needed.

Alternatively, the combinatorics of such a theoretical lattice or parallelohedron can be used to construct a counterexample to the Voronoi conjecture.

At this point, all examples that were considered do satisfy Theorem \ref{thm:basic}.

\subsection*{Acknowledgments.}

\vskip.7cm

This work was completed while the author was a visiting professor at IST Austria. The author is thankful to IST Austria and the group of Herbert Edelsbrunner for hospitality and support.

\appendix

\section{Delone subdivisions of $\ssd_{2m}^*$}\label{sec:appendix}


Here we provide the complete proof for the approach described in the proof of Lemma~\ref{lem:d_even} for all Delone subdivisions of $\ssd_{2m}^*$; similar approach can be used for Lemma \ref{lem:d_even} and some steps of our proof are redundant. We fix one such subdivision $\mathcal D$ of $\ssd_{2m}^*$ corresponding to some lattice $\Lambda$. Recall that $\mathcal G$ and $\mathcal S$ are the group of cycles of $G(\mathcal D)$ and its subgroup generated by the basic cycles.

We also fix one cycle $x$ in the associated red Venkov graph $G(\mathcal D)$. We will show that we can choose a representative from $x+\mathcal S$ in $\mathcal G$ which is generated by basic cycles itself. This is enough to complete the proof of Lemma \ref{lem:d_even}.

\subsection{The cycle $x$ passes through a D-vertex} Suppose $\mathbf a$ is a D-vertex of $x$ and let $\ell$ be the corresponfing edge of the decomposition $\mathcal D$. Note that $\ell$ belongs only to triangular faces of $\mathcal D$ because each free sum of the Delone subdivision of $\ssd_{2m}^*$ contains at most one pair of vertices differ by $\ell$.

We may assume that if $\mathbf{bac}$ are two consecutive edges of $x$, then $\mathbf b$ and $\mathbf c$ correspond to two edges within one three-dimensional polytope $P$ of $\mathcal D$. Moreover, $P$ is either tetrahedron, octahedron, or quadrangular pyramid.   

In all cases, all cycles within the subgraph of $G(\mathcal D)$ induced by the vertices corresponding to the edges of $P$ are generated by basic cycles, and we can find a path between $\mathbf b$ and $\mathbf c$ within this subgraph that does not go through $\mathbf a$. We can swap the pair of edges $\mathbf{bac}$ by this path in $x$ and this path will not contain another D-vertex. Indeed, if there is another D-vertex in the new path, then there are two edges corresponding to D-vertices in one free sum of cubes and these edges intersect in the center of the free sum which is impossible for Delone decomposition of $\Lambda$.

After performing such swaps for every D-vertex in $x$ we get a new cycle in the same coset $x+\mathcal S$ (which we will also refer as $x$) without D-vertices.

\subsection{The cycle $x$ contains II-edges} Suppose $\mathbf{ab}$ is an II-edge of $x$. Then there is a triangle of $\mathcal D$ with edges represented by integer vectors $\mathbf a$, $\mathbf b$, and $\mathbf a\oplus\mathbf b$. After proper translation we may assume that one vertex of this triangle is at the origin $\mathbf 0$, and two others are at $\mathbf a$ and $\mathbf b$. We claim that there exists a half-integer vertex $\mathbf c$ such that $\mathbf{0abc}$ is a tetrahedron of $\mathcal D$. 

Note that neither $\mathbf a$ nor $\mathbf b$ correspond to D-vertex. If $\mathbf{a}\oplus \mathbf{b}$ is not a D-vertex, then the triangle $\mathbf{0ab}$ is part of one face $F$ of one integer cube of some free sum within the Delone decomposition of $\ssd_{2m}^*$. Adding any half-integer vertex of this free sum to $F$ we get another face. The subdivision $\mathcal D$ induces a subdivision of this face and hence this additional vertex, which we denote $\mathbf{c}$, gives the desired tetrahedron.

If $\mathbf{a}\oplus \mathbf{b}$ is a D-vertex, then let $P$ be any full-dimensional polytope of $\mathcal D$ incident to $\mathbf{0ab}$ and let $F$ be any three-dimensional face of $P$ that contains $\mathbf{0ab}$ and at least one non-integer vertex. Similarly to the previous step, $F$ is either tetrahedron, octahedron, or pyramid.

If $F$ is an octahedron, then there are two edges of $F$ equivalent to $\mathbf{a}\oplus \mathbf b$, but this is impossible within one free sum. If $F$ is a pyramid, then $\mathbf{0ab}$ is one of its faces and one more vertex of $F$ is a half-integer point. This means that among two parallel sides of the base of $F$, one connects two integer points and the other one connects two half-integer points. This is again impossible within one free sum of cubes. This leaves us with the only option that $F$ is a tetrahedron $\mathbf{0abc}$.

Now in $\mathbf{0abc}$, we can swap the edge $\mathbf{ab}$ with the path $\mathbf{acb}$ because the cycle $\mathbf{abca}$ is trivially contractible. After this change we swap II-edge with two IH-edges. After doing this for all II-edges we get a representative of $x+\mathcal{S}$ without D-vertices and without II-edges as we don't add any integer vertex to $x$. 


\subsection{The cycle $x$ contains D-edges} Suppose $\mathbf{ab}$ is a D-edge of $G(\mathcal D)$. Then $\mathbf{a}\oplus \mathbf{b}$ represents a D-vertex of $G(\mathcal D)$ and there is a triangle of $\mathcal D$ with edges equivalent to $\mathbf a$, $\mathbf b$, and $\mathbf a \oplus \mathbf b$. Let $F$ be any three-dimensional face of $\mathcal D$ incident to this triangle. We claim that $F$ is a tetrahedron.

Similarly to the previous step, $F$ is either a tetrahedron or a quadrangular pyramid because one edge of $F$ corresponds to the D-vertex $\mathbf a \oplus \mathbf b$ of the red Venkov graph. However, if $F$ is a pyramid, then $\mathbf a \oplus \mathbf b$ corresponds to its side edge and two base vertices of $F$ belong to different cubes of some free sum $P$. The other two base vertices must belong to different cubes of the free sum as well.

If pairs of vertices of the base from each cube of $P$ form the sides of the base, then the two cubes of the free sum have parallel sides which is impossible. If these pairs form the diagonals, then the center of the base is the center of the free sum too as this is the only common point of two cubes. However, $\mathbf a \oplus \mathbf b$ corresponds to a D-vertex, so its midpoint must be the center of the free sum which gives a contradiction. Thus $F$ must be a tetrahedron.

We can assume that two vertices connected with the edge $\mathbf a \oplus \mathbf b$ are integer vertices of the free sum that contains $F$, and the third vertex of the initial triangle is a half-integer vertex. The fourth vertex can be either integer or half-integer, but in both cases we can use the cycle shown in Figure \ref{pict:d_edge} to change the D-edge $\mathbf{ab}$ into either two S- or H-edges (left part) or two IH-edges (right part). Note, that we do not create II-edges and we do not add D-vertices or D-edges in the process.

\begin{center}
\begin{figure}
\includegraphics[width=\textwidth]{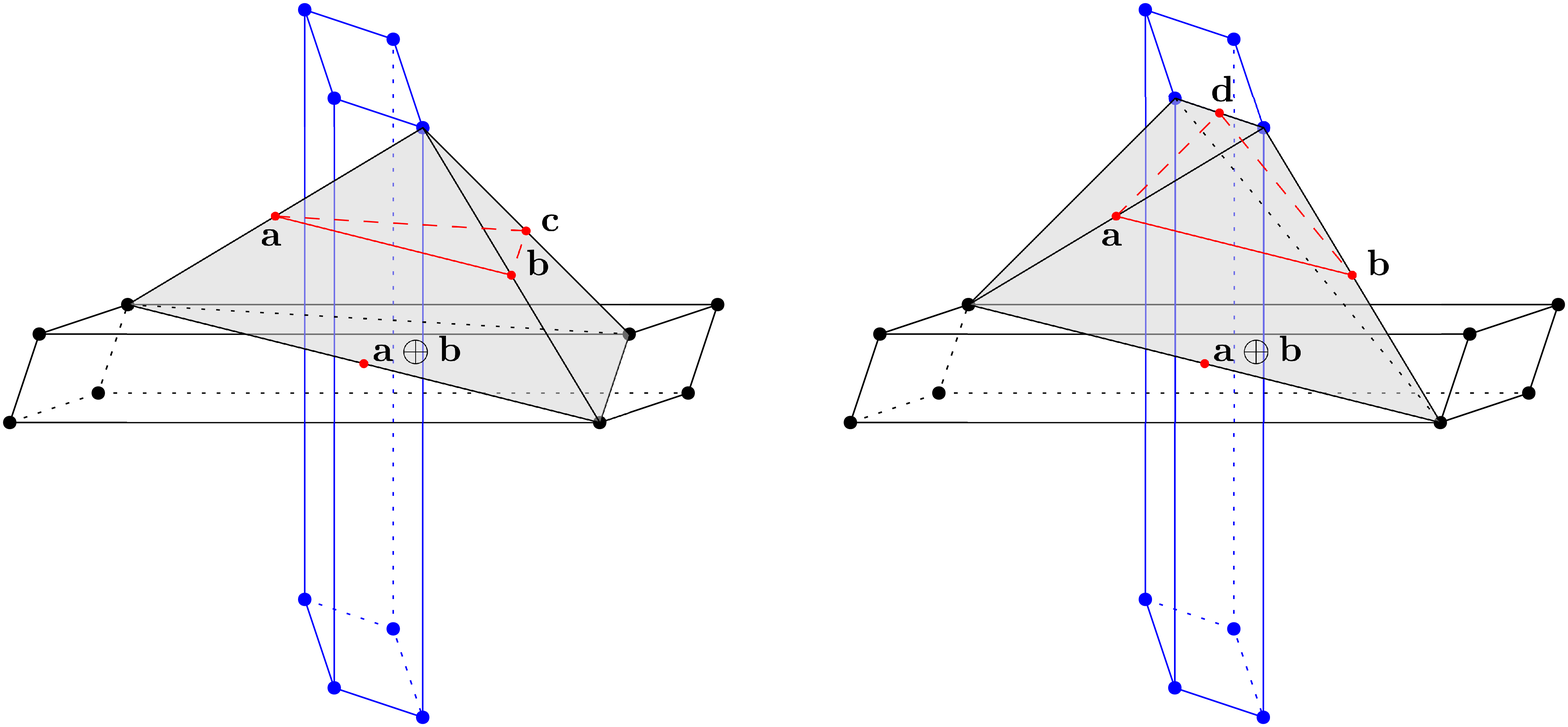}
\caption{Two possible ways to swap the D-edge $\mathbf{ab}$ in the cycle $x$. In both cases the red cycle is trivially contractible within the highlighted tetrahedral dual 3-cell. If the fourth vertex of the corresponding tetrahedron is an integer vertex (the left part), then $\mathbf{ab}$ can be swapped with two edges $\mathbf{acb}$ and the new edges are S- or H-edges because all $\mathbf a$, $\mathbf b$, and $\mathbf c$ correspond to half-integer vertices of the red Venkov graph. If the fourth vertex of the corresponding tetrahedron is a half-integer vertex (the right part), then $\mathbf{ab}$ can be swapped with two edges $\mathbf{adb}$ and the new edges are IH-edges because $\mathbf a$ and $\mathbf b$ correspond to half-integer vertices and $\mathbf d$ corresponds to an I-vertex of the red Venkov graph.}
\label{pict:d_edge}
\end{figure}
\end{center}

After performing these steps, the new representative of $x+\mathcal S$ contains only S-, H- or IH-edges, and does not contain D-vertices.

\subsection{The cycle $x$ contain H-edges} Suppose $\mathbf{ab}$ is an H-edge of $x$. Then $\mathbf c:=\mathbf a \oplus \mathbf b$ is an integer vertex of $G(\mathcal D)$ and $\mathbf{abca}$ is a half-belt cycle. Moreover, since $\mathbf{ab}$ is not a D-edge, then $\mathbf a \oplus \mathbf b$ is not a D-vertex.

Swapping the edge $\mathbf{ab}$ with the pair of edges $\mathbf{acb}$ does not change the coset $x+\mathcal S$ and changes an H-edge to two IH-edges. After that our cycle $x$ will contain only IH- and S-edges and all integer vertices of $x$ are I-vertices.

\subsection{The cycle $x$ contains I-vertices} Suppose $\mathbf a$ is an I-vertex of $x$. Since there are no II-edges in $x$, there are half-integer vertices $\mathbf b$ and $\mathbf c$ such that $x$ contains the pair of edges $\mathbf{bac}$.

According to Lemma \ref{lem:i_vertex}, $\mathbf b$ and $\mathbf c$ coincide with $\frac 12\mathbf a$ in all non-zero coordinates of $\mathbf a$. We can connect $\mathbf b$ and $\mathbf c$ with a path of S-edges such that every vertex of this path is connected with $\mathbf a$. We claim that we can change the pair of edges $\mathbf{bac}$ with this path without changing the coset.

If $\mathbf a$ has only one non-zero coordinate, then the proof is given in the proof of Lemma \ref{lem:d_even}. Otherwise, similarly to the proof of Lemma \ref{lem:d_even}, it is enough to to treat only the case when $\mathbf b$ and $\mathbf c$ are connected with an S-edge.

Let $\mathbf f :=\mathbf b \oplus \mathbf c$. The vertex $\mathbf f$ of $G(\mathcal D)$ is an integer vertex and the corresponding vector has exactly one non-zero coordinate. In that case we can find a free sum $P$ of two cubes in the Delone decomposition of $\ssd_{2m}^*$ such that $\mathbf a$ represents a diagonal of some face of one cube, $\mathbf f$ represents a side of the other cube, and $\mathbf b$ and $\mathbf c$ represent two vectors between the cubes. This is true because the non-zero coordinate of $\mathbf f$ is on the position where $\mathbf a$ has zero coordinate.

%


Moreover, we may choose $P$ in such a way that the it contains the origin $\mathbf 0$ and two points $\mathbf b$ and $\mathbf c$. Let $F$ be the smallest face of $P$ that contains diagonal corresponding to $\mathbf a$ and the origin $\mathbf 0$ is one of the vertices of this diagonal. Then the convex hull of $F\cup \{\mathbf{b}\}\cup \{\mathbf{c}\}$ is a face of $P$ and must be subdivided in $\mathcal D$. This subdivision will contain the tetrahedron $\mathbf{0abc}$ and the cycle $\mathbf{bacb}$ is a trivially contractible cycle in this tetrahedron. Thus we can swap two edges $\mathbf{bac}$ with the edge $\mathbf{bc}$. 

Repeating this approach while $x$ has at least one I-vertex, we get a cycle from coset $x+\mathcal S$ with only S-edges.

\subsection{Concluding steps for the cycle $x$ with only S-edges} Similarly to Lemma~\ref{lem:d_even}, we can represent $x$ (or its rational multiple) as a combination of cycles comprising two-dimensional faces of the cube $C_{2m}$. However, for the Delone subdivision $\mathcal D$, not all such cycles will be trivially contractible because some of the pyramids described in the proof of Lemma \ref{lem:d_even} could be subdivided into pairs of tetrahedra. Nevertheless, the cycle composed of edges of such subdivided pyramid incident to its apex will be a combination of two trivially contractible cycles around the same vertex in the two new tetrahedra. Thus, every cycle in the one-dimensional skeleton of $C_{2m}$ belongs to $\mathcal S$ and the group $\mathcal G$ is generated by basic cycles in this case as well.

\begin{thebibliography}{99}
\bibitem{BG} E.~Baranovskii, V.~Grishukhin, Non-rigidity Degree of a Lattice and Rigid Lattices, {\it Eur. J. Comb.}, {\bf 22}:7 (2001), 921--935, \url{https://doi.org/10.1006/eujc.2001.0510}.

\bibitem{CS} J.~H.~Conway, N.~J.~A.~Sloane, {\it Sphere Packings, Lattices and Groups}, 3rd edition, Spinger-Verlag New York (1999), \url{https://doi.org/10.1007/978-1-4757-6568-7}.

\bibitem{CS91} J.~H.~Conway, N.~J.~A.~Sloane, The Cell Structures of Certain Lattices. In {\it Miscellanea Mathematica}, eds. P.~Hilton, F.~Hirzebruch, R.~Remmert, Springer-Verlag, Berlin, Heidelberg, 1991, \url{https://doi.org/10.1007/978-3-642-76709-8_5}.

\bibitem{Del} B.~Delone. Sur la partition r\'eguli\`ere de l'espace \`a 4 dimensions. {\it Bulletin de l'Acad\'emie des Sciences de l'URSS}. VII s\'erie, 1929, no. 1, 79--110, \url{http://mi.mathnet.ru/eng/izv5329} and no. 2, 147--164, \url{http://mi.mathnet.ru/eng/izv5333}.


\bibitem{DG} M.~Deza, V.~Grishukhin, Nonrigidity Degrees of Root Lattices and their Duals, {\it Geometriae Dedicata}, {\bf 104} (2004), 15--24, \url{https://doi.org/10.1023/B:GEOM.0000022863.32544.81}.

\bibitem{DGM} M.~Dutour Sikiri\'c, A.~Garber, A.~Magazinov, On the Voronoi Conjecture for Combinatorially Voronoi Parallelohedra in Dimension 5. {\it SIAM J. Discrete Math.}, {\bf 34}:4 (2020), 2481--2501, \url{https://doi.org/10.1137/18M1235004}.

\bibitem{DGSW} M.~Dutour~Sikiri\'c, A.~Garber, A.~Sch\"urmann, C.~Waldmann, The complete classification of five-dimensional Dirichlet-Voronoi polyhedra of translational lattices. {\it Acta Crystallographica}, {\bf A72} (2016), 673--683, \url{https://doi.org/10.1107/S2053273316011682}.

\bibitem{DV} M.~Dutour, F.~Vallentin, Some six-dimensional rigid forms. In {\it Voronoi's Impact on Modern Science, Book 3}, eds. H.~Syta, A.~Yurachivsky, P.~Engel, Institute of Math., Kyiv (2005), available at \url{https://arxiv.org/abs/math/0401191}.

\bibitem{Gar4dim} A.~Garber, On $\pi$-surfaces of four-dimensional parallelohedra, {\it Annals of Combinatorics}, {\bf 21} (2017), 551--572, \url{https://doi.org/10.1007/s00026-017-0366-9}.

\bibitem{GGM} A.~Garber, A.~Gavrilyuk, A.~Magazinov, The Voronoi Conjecture for Parallelohedra with Simply Connected $\delta$-Surfaces, {\it Discrete \& Computational Geometry}, {\bf 53}:2 (2015), 245--260, \url{https://doi.org/10.1007/s00454-014-9660-z}.


\bibitem{GM} A.~Garber, A.~Magazinov, Voronoi conjecture for five-dimensional parallelohedra. arXiv preprint (2019), \url{https://arxiv.org/abs/1906.05193v3}.

\bibitem{HSS} E.~Harriss, D.~Schattschneider, M.~Senechal, Chapter 3: Tilings. In {\it Handbook of Discrete and Computational Geometry}, 3rd edition, eds. J.~E.~Goodman, J.~O'Rourke, and C.~D.~Tóth, CRC Press, 2017, available at \url{https://www.csun.edu/~ctoth/Handbook/HDCG3.html}.

\bibitem{HRGZ} M.~Henk, J.~Richter-Gebert, G.~M.~Ziegler, Chapter 15: Basic properties of convex polytopes. In {\it Handbook of Discrete and Computational Geometry}, 3rd edition, eds. J.~E.~Goodman, J.~O'Rourke, and C.~D.~Tóth, CRC Press, 2017, available at \url{https://www.csun.edu/~ctoth/Handbook/HDCG3.html}.


\bibitem{Mag3cells} A.~Magazinov, On Delaunay's classification theorem on faces of parallelohedra of codimension three. arXiv preprint (2015), \url{https://arxiv.org/abs/1509.08279}.


\bibitem{McM1} P.~McMullen, Convex bodies which tile space by translation. {\it Mathematika}, {\bf 27}:1 (1980), 113--121, \url{https://doi.org/10.1112/S0025579300010007}. 


\bibitem{McM2} P.~McMullen, Convex bodies which tile space by translation: acknowledgement of priority. {\it Mathematika}, {\bf 28}:2 (1981), 191--191, \url{https://doi.org/10.1112/S0025579300010238}.



\bibitem{Min} H.~Minkowski, Allgemeine Lehrs\"atze \"uber die convexen Polyeder. {\it Nachrichten von der Gesellschaft der Wissenschaften zu G\"ottingen, Mathematisch-Physikalische } (1897), 198--220, \url{http://www.digizeitschriften.de/dms/img/?PID=GDZPPN002497875}.


\bibitem{Ord} A.~Ordine, {\it Proof of the Voronoi conjecture on parallelotopes in a new special case}. Ph.D. thesis, Queen's University, Ontario, 2005.


\bibitem{Val} F.~Vallentin, {\it Sphere Coverings, Lattices, and Tilings (in Low Dimensions)}, Ph.D. Thesis, TU Munich, 2003.

\bibitem{Ven} B.~A.~Venkov, On a class of Euclidean polyhedra (in Russian). {\it Vestnik Leningrad. Univ. Ser. Mat. Fiz. Him.}, {\bf 9}:2 (1954), 11--31.

\bibitem{Vor} G.~Voronoi, Nouvelles applications des param\`etres continus \`a la th\'eorie des formes quadratiques. {\it Crelle's Journal}, {\bf 133} (1908), 97--178, \url{https://doi.org/10.1515/crll.1908.133.97}; {\bf 134} (1908), 198--287, \url{https://doi.org/10.1515/crll.1908.134.198}; and {\bf 136} (1909), 67--181, \url{https://doi.org/10.1515/crll.1909.136.67}. 

\bibitem{Zhi} O.~K.~Zhitomirskii, Versch\"arfung eines Satzes von Woronoi. {\it J. Leningr. Math. Soc.}, {\bf 2} (1929), 131--151.

\end{thebibliography}
\end{document}